\newtheorem{proposition}{\textbf{Proposition}}
\newtheorem{corollary}{\textbf{Corollary}}
\newtheorem{remark}{\textbf{Remark}}
\newtheorem{theorem}{\textbf{Theorem}}
\newtheorem{lemma}{\textbf{Lemma}}
 \def\dx{{\rm d}x}
\def\del  {\partial}
\def\div{\mathrm{div}}
\def\eps{\varepsilon}
\def\u{\mathbf{u}}
\def\R{\mathbb{R}}
\DeclareMathOperator{\ext}{Ext}
\DeclareMathOperator{\supp}{supp}
\DeclareMathOperator{\lo}{lower}
\DeclareMathOperator{\up}{upper}
\DeclareMathOperator{\opt}{opt}
\DeclareMathOperator{\ad}{ad}
\newenvironment{proof}{%
{\noindent \bf Proof : }%
}{%
\hfill$\Box$\\%
}
\def\R{\mathbb{R}}
\author{
  {\normalsize Michael Hinz}\thanks{Fakult\"{a}t f\"{u}r Mathematik, Universit\"{a}t Bielefeld, Postfach 100131, 33501 Bielefeld,
Germany.}
\and
{\normalsize Fr\'ed\'eric Magoul\`es}\thanks{CentraleSup\'elec, Universit\'e Paris Saclay, France and  University of Pécs, Pécs, Hungary}
		\and
  {\normalsize Anna Rozanova-Pierrat}\thanks{CentraleSup\'elec, Universit\'e Paris-Saclay, France.}
  \and
  {\normalsize Marina Rynkovskaya}\thanks{Peoples’ Friendship University of Russia (RUDN University), Russian Federation}
    \and
	{\normalsize Alexander Teplyaev}\thanks{Department of Mathematics, University of Connecticut, Storrs, CT 06269-3009 USA.}
		}
\title{On the existence of optimal shapes in architecture}
\date{}
\begin{document}
\maketitle
\thispagestyle{fancy}

\begin{abstract}
\noindent We consider shape optimization problems for elasticity systems in architecture. A typical question in this context is to identify a structure of maximal stability close to an initially proposed one. We show the existence of such an optimally shaped structure
within classes of bounded Lipschitz domains and within wider classes of bounded uniform domains with boundaries that may be fractal.
In the first case the optimal shape realizes the infimum of a given energy functional over the class, in the second case it realizes the minimum. As a concrete application we discuss the existence
of maximally stable roof structures under snow loads.
\end{abstract}

\begin{keywords}
 Elasticity system;  shape optimization; trace and extension; fractals; Mosco convergence 
\end{keywords}

\section{Introduction}
The question of how to identify the most stable, the most lightweight or the most suitable (by some parameters) structure is a natural question of shape optimization. The main idea is to vary a shape within a chosen class of domains whose elements all satisfy certain industrial constraints (such as the quantity of material, spatial and dimensional restrictions or some kind of similarity to a fixed form) and to search for an optimal shape in the class that minimizes a given target functional. There is a huge body of literature on different numerical methods~\cite{GERO-1975,MICHALEK-2002,ZANCHETTIN-2011,ZAWIDZKI-2017,ALLAIRE-2007,DAPOGNY-2017} and on implemented packages~\cite{WORTMANN-2017} suitable for simulations.

From the numerical point of view finite dimensional shape optimization problems are always solvable: In a finite dimensional model a discretized domain is determined by a finite mesh, so there are only finitely many possible changes of its boundary which respect all given constraints and one can find at least one shape which is optimal. Non-existence results and other counterexamples usually come from the passage to the limit as the mesh becomes finer and finer. If one abandons the discretized perspective and adopts a more theoretical point of view, where the domains are subsets of $\R^n$ and the target functional is defined on an  infinite dimensional space, the existence of an optimal shape is not trivial at all. There are different classical examples of shape optimization problems for which no optimal shape exists~\cite[Section~4.2]{HENROT-2005}. In particular, the existence of an optimal shape, introduced and formulated for the architecture design framework in~\cite{DAPOGNY-2017}, had so far remained an open problem. Here we solve this problem, and we provide an application to shape optimization for roofs. 

We study a linear elasticity system (formula \eqref{Eq} below) whose solution models the displacement of an architectural structure such as a roof or a bridge. The domain is subject to two types of boundary conditions: a Neumann (or 'stress') boundary condition which models a traction applied to a part of the boundary of the structure, and a homogeneous Dirichlet boundary condition on another part of the boundary which is clamped and therefore cannot be displaced. We refer to~\cite{CIARLET-1988} for a survey on related well-posedness results for bounded Lipschitz domains. Here we update these classical results in the framework of bounded uniform domains whose boundaries support suitable measures, a similar setup is considered in~\cite{HINZ-2020} for the Helmholtz equation. These boundaries may have 'fractal' parts of different Hausdorff dimensions. Fairly general boundary trace and extension results from \cite{AH96, ArendtWarma2003, BIEGERT-2009, JONSSON-2009, WALLIN-1991} apply, and together with Korn's inequality, which remains valid for uniform domains,~\cite{DURAN-2004}, they allow to establish a well-posedness result, Theorem \ref{ThWelPos}. 

The existence of optimal shapes for elliptic problems with homogeneous Dirichlet conditions and homogeneous Neumann conditions is mainly due to Chenais~\cite{CHENAIS-1975}, see~\cite{HENROT-2005} for more details and further references. In this well-known setup the classes of domains for which the existence of optimal shapes is verified are classes of Lipschitz domains contained in a single larger domain (confinement) and having the $\eps$-cone property with the same $\varepsilon>0$. The notions of convergence for domains discussed in this situation are the Hausdorff convergence, the convergence in the sense of characteristic functions and the convergence in the sense of compacts,~\cite{HENROT-2005}. We invoke these classical results to verify the existence of an optimal shape of a roof structure. The assumption of homogeneous Neumann conditions, at least on the part of the boundary that can actually vary in the optimization procedure, is essential for these classical methods to work. From a modeling perspective homogeneous Neumann conditions on the varying parts of the boundary are adequate if there is no significant external force acting on the corresponding parts of the surface of the roof. 

In practice parts of the surface may be subject to significant forces that should be taken into account when searching for an optimal shape. In the simple case of a roof, for instance, the upper surface could be subject to the weight of a heavy load of snow. Such external forces correspond to inhomogeneous Neumann conditions on the varying parts of the boundary. If inhomogeneous Neumann boundary conditions are imposed, care is needed, because boundary integrals appear in the variational formulation of the problem. When varying the shape, the natural notion of convergence for these measures is weak convergence, and this is delicate in the sense that the weak limit of a sequence of codimension one Hausdorff measures (which are the 'natural' surface measures on the boundary of a Lipschitz domain) is not necessarily a Hausdorff measure itself. Using the method in~\cite{MAGOULES-2020} one can show the existence of an optimal shape which realizes the infimum of the energy over a class of bounded Lipschitz domains, Theorem \ref{ThPrincipale}. Employing results from~\cite{HINZ-2020} one can verify the existence of an optimal shape in a larger class of bounded uniform domains 
which then realizes the minimum of the energy over this class, Theorem \ref{ThOptimalEpsDelta}. If the architectural shapes are  recruited from such classes of domains, energy minimizing shapes can be seen to exist, Corollary \ref{C:roof}.

In Section~\ref{SecNotations} we collect some notation. In Section~\ref{SecFuncAn} we introduce the elasticity system, 
discuss trace and extension methods, Theorem \ref{ThTracecheap}, and verify the validity of a norm equivalence with uniform constants, Lemma \ref{L:equivnorms}. We then define weak solutions \eqref{EqVF} and obtain the well-posedness result, Theorem \ref{ThWelPos}.
Shape optimization problems for the simple case of homogeneous Neumann boundary conditions are discussed in Section~\ref{SecShapeOp} using the well-known results from~\cite{CHENAIS-1975} and~\cite{HENROT-2005}. As a practical example we address the existence question for optimal shapes of a roof, Theorem \ref{Throof}, and we comment on a related problem formulated in~\cite{DAPOGNY-2017}. In Section \ref{S:Lip} we follow the method of~\cite{MAGOULES-2020} to study a less simple generalization with possibly inhomogeneous Neumann boundary conditions. Section \ref{S:uni} addresses the uniform domain setup and states the existence of energy minimizing optimal shapes. In Appendix~\ref{AppDef} we provide generalized Green's formulas that justify our definition of weak solution, in Appendix~\ref{S:technical} we briefly comment on a technical detail in the proof of Lemma \ref{L:equivnorms}. Appendix~\ref{SecMosco} contains an auxiliary result, namely the the Mosco convergence of energy functionals for Robin type problems along a sequence of suitably converging uniform domains.

\section*{Acknowledgements}
M.H. gratefully acknowledges financial support by the DFG IRTG 2235 and the DFG CRC 1283. 
A.T. was  supported in part by NSF grant  DMS-1613025. A. R.-P. thanks her student Antoine Verdon for his premilinar work in the subject. M.R. thanks for the financial support  CentraleSup\'elec. 

\section{Some notation}\label{SecNotations}

For the Euclidean scalar product of two vectors $\xi,\eta\in \mathbb{R}^N$ we write $\xi\cdot \eta$. By $\mathcal{M}_N$ we denote the vector space of $N\times N$-matrices with real entries, and by $A:B$ we denote the full contraction
\begin{equation}\label{EqNot:}
	A:B=\mathrm{tr}\: A^tB=\sum_{i,j=1}^N a_{ij} b_{ij}
\end{equation}
of two matrices $A=(a_{ij})_{1\le i,j\le N}$ and $B=(b_{ij})_{1\le i,j\le N}$ from $\mathcal{M}_N$. Note that the full contraction \eqref{EqNot:} provides a natural inner product on $\mathcal{M}_N$. Let $\mathcal{M}_N^s$ be the subspace of $\mathcal{M}_N$ consisting of symmetric matrices and given $\alpha>0$ and $\beta>0$ let $\mathcal{M}_N^s(\alpha,\beta)$ denote the subspace of all invertible $M\in \mathcal{M}_N^s$ such that $M\xi\cdot \xi \ge \alpha |\xi|^2$ and $M^{-1} \xi\cdot\xi \ge \beta|\xi|^2$ for all $\xi \in \R^N$.

Given a domain $\Omega\subset \mathbb{R}^N$ and a vector field 
$\mathbf{v} \in  W^{1,2}(\Omega)^N$ we denote the symmetric part of its gradient by
\[e(\mathbf{v})=\frac{1}{2}\left(\nabla \mathbf{v}+(\nabla \mathbf{v})^t\right). \]
The assumption on $\mathbf{v}$ implies that $e(\mathbf{v})\in L^2(\Omega,\mathcal{M}^s_N)$. Here $W^{1,2}(\Omega)$ is the classical Sobolev space, and product norms on $W^{1,2}(\Omega)^N$ are defined in the natural way. For an element $T=(T_{ij})_{1\le i,j\le N}$ of $W^{1,2}(\Omega)^{N\times N}$ we define $\mathrm{div}\: T\in L^2(\Omega)^N$ as the vector field $(\sum_{j=1}^N \partial_{x_j}T_{1j}, \ldots , \sum_{j=
1}^N \partial_{x_j}T_{Nj})$ .

By $B(x,r)$ we denote the Euclidean open ball centered in $x$ of the radius $r>0$. We use the symbol $\lambda^N$ for the $N$-dimensional Lebesgue measure and the symbol $\mathcal{H}^{N}$ for the $N$-dimensional Hausdorff measure.

\section{Variational formulation and well-posedness}\label{SecFuncAn}

We assume $N\geq 2$. Let $\Omega\subset \mathbb{R}^N$ be a bounded domain and let $\Gamma_{\mathrm{Dir}}$ and $\Gamma_{\mathrm{Neu}}$ be subsets of its boundary $\partial\Omega$. We assume that they are 'almost disjoint' (in the sense that their overlap has zero measure), this will be made precise below. Let $A\in L^\infty(\Omega,\mathcal{M}_N^s(\alpha,\beta))$ and write $\sigma(\mathbf{v})=Ae(\mathbf{v})$, $\mathbf{v} \in  W^{1,2}(\Omega)^N$. We are interested in solutions $\mathbf{u}\in W^{1,2}(\Omega)^N$ to problems of type
\begin{equation}\label{Eq}
\begin{cases}
	-\mathrm{div} \:\sigma(\mathbf{u})&=\mathbf{f} \quad \text{in $\Omega$},\\  
	\hspace{34pt}\mathbf{u}&=0 \quad \text{on $\Gamma_{\mathrm{Dir}}$},\\
	\hspace{8pt}\sigma(\mathbf{u})\cdot n&=\mathbf{g} \quad \text{on $\Gamma_{\mathrm{Neu}}$}. 
	\end{cases}
\end{equation}

Here $n$ denotes the outward unit normal. The domain $\Omega$ is 'clamped' at $\Gamma_{\mathrm{Dir}}$. The part of $\Gamma_{\mathrm{Neu}}$ on which $\mathbf{g}$ is not zero is subjected to a traction represented by the force field $\mathbf{g}$. The vector field $\mathbf{f}$ represents a force field experienced inside $\Omega$. The prospective solution $\mathbf{u}$ is the (unknown) displacement vector field, $e(\mathbf{u})$ is the strain tensor and $\sigma(\mathbf{u})$ is the stress tensor, determined by the given coefficient $A$, which is minus the elasticity tensor in Hooke's law. 


\newpage
\begin{remark}\mbox{}
\begin{enumerate}
\item[(i)]
A more common choice for $\sigma$ within the linear elasticity theory for isotropic materials is $\sigma(\mathbf{v})=\lambda (\mathrm{tr}\: e(\mathbf{v})) I +2 \mu e(\mathbf{v})$, where $\mu>0$ and $\lambda> - 2\mu/N$ are the Lam\'e coefficients of the 
material. See for instance \cite[Section 6.2]{CIARLET-1988}. With insubstantial modifications our results also apply to this case.
\item[(ii)] Note that from ~\eqref{Eq} one can recover the system considered in \cite[Subsection 3.1, formula (1)]{DAPOGNY-2017} if $\Gamma_{\mathrm{Neu}}$ is split into a part where $\mathbf{g}$ is nonzero and one where it is zero.
\end{enumerate}
\end{remark}

We will give a rigorous meaning to ~\eqref{Eq} in terms of a variational formulation. This works well if the boundary $\partial\Omega$ is the support of a suitable measure so that Sobolev functions, fields or tensors on $\Omega$ have well-defined traces on $\partial\Omega$. 

Recall that given $d>0$, a Borel measure  $\mu_F$ on $\mathbb{R}^N$ with $F=\supp \mu_F$ is said to be \emph{upper $d$-regular} if there is a constant $c_d>0$ such that 
\begin{equation}\label{E:upperreg}
\mu_F(B(x,r))\leq c_d r^d,\quad x\in F,\quad 0<r\leq 1.
\end{equation}
This property is well-known and widely used, we refer to \cite{AH96} and \cite{FALCONER-1985}. Recall also that a domain $\Omega\subset \mathbb{R}^N$ is said to be an \emph{$W^{1,2}$-extension domain} if there is a bounded linear extension operator $E:W^{1,2}(\Omega)\to W^{1,2}(\mathbb{R}^N)$, \cite{JONES-1981, ROGERS-2006}. We state a trace result that follows from a special case of \cite[Theorem 5.1]{HINZ-2020} and the finiteness of the measure on $\partial\Omega$. The result is based on \cite[Corollaries 7.3 and 7.4]{BIEGERT-2009}, which in turn use \cite[Theorems 7.2.2 and 7.3.2]{AH96}.

\begin{theorem}\label{ThTracecheap}
Let $\Omega\subset \mathbb{R}^N$ be a bounded $W^{1,2}$-extension domain. Suppose that $\mu_{\partial\Omega}$ is a Borel measure with $\supp \mu_{\partial\Omega}=\partial\Omega$ and such that \eqref{E:upperreg} holds with some $N-2<d\leq N$. 
\begin{enumerate}
\item[(i)]
 There are a compact linear operator $\operatorname{Tr}:W^{1,2}(\Omega) \to L^2(\partial\Omega,\mu_{\partial\Omega})$ and a constant $c_{\mathrm{Tr}}>0$, depending only on $N$, $\varepsilon$, $d$ and $c_d$, such that 
	\[\left\|\operatorname{Tr} f\right\|_{L^2(\partial\Omega,\mu_{\partial\Omega})}\leq c_{\operatorname{Tr}}\left\|f\right\|_{W^{1,2}(\Omega)},\quad  f\in W^{1,2}(\Omega).\]
	Endowed with the norm 
	\[\left\|\varphi\right\|_{\operatorname{Tr}(W^{1,2}(\Omega))}:=\inf\{ \left\|g\right\|_{W^{1,2}(\Omega)}|\ \varphi=\mathrm{Tr}\:g\}\]
	the image $\operatorname{Tr}(W^{1,2}(\Omega))$ becomes a Hilbert space. The embedding $$\operatorname{Tr}(W^{1,2}(\Omega))\subset L^2(\partial\Omega,\mu_{\partial\Omega})$$ is compact.
	\item[(ii)]  There is a linear operator $H_{\partial\Omega}:\operatorname{Tr}(W^{1,2}(\Omega)) \to W^{1,2}(\Omega)$ of norm one such that $\operatorname{Tr}(H_{\partial\Omega}\varphi)=\varphi$ for all $\varphi\in \operatorname{Tr}(W^{1,2}(\Omega))$.
	\end{enumerate}
\end{theorem}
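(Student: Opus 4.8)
The statement combines a genuinely analytic input, which is available in the cited literature, with standard Hilbert space bookkeeping, so my plan is to separate these two layers. The analytic core is a bounded (and compact) trace estimate from $W^{1,2}(\R^N)$ into $L^2(\partial\Omega,\mu_{\partial\Omega})$; I would obtain part (i) by transporting it to $\Omega$ through the extension operator, and then read off the Hilbert space structure of $\operatorname{Tr}(W^{1,2}(\Omega))$ and the extension operator of part (ii) by elementary arguments about closed subspaces of a Hilbert space.

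First I would reduce everything to the whole space. Since $\Omega$ is a $W^{1,2}$-extension domain, I fix a bounded linear extension $E:W^{1,2}(\Omega)\to W^{1,2}(\R^N)$ and work with $Ef$. On $\R^N$ a function is evaluated on $\partial\Omega$ through its quasi-continuous representative, and here the hypothesis $N-2<d$ is decisive: under upper $d$-regularity with $d>N-2$ the measure $\mu_{\partial\Omega}$ assigns no mass to sets of zero $W^{1,2}$-capacity, so this evaluation yields a well-defined element of $L^2(\partial\Omega,\mu_{\partial\Omega})$ that is independent of the chosen extension. This is exactly the setting of the capacitary strong-type estimates of Adams and Hedberg \cite{AH96} and of the trace corollaries of \cite{BIEGERT-2009}, assembled for the present situation as the special case of \cite[Theorem 5.1]{HINZ-2020}. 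Composing the resulting $\R^N$-trace with $E$ defines $\operatorname{Tr}$ and gives $\norm{\operatorname{Tr} f}_{L^2(\partial\Omega,\mu_{\partial\Omega})}\le c_{\operatorname{Tr}}\norm{f}_{W^{1,2}(\Omega)}$, with $c_{\operatorname{Tr}}$ depending only on $N$, $d$, $c_d$ and the norm of $E$ (in the uniform-domain context the last of these is controlled by the geometry parameter $\varepsilon$ in the statement).

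The main obstacle is the compactness of $\operatorname{Tr}$, and this is where the finiteness of $\mu_{\partial\Omega}$ enters, which holds because $\partial\Omega$ is compact and $\mu_{\partial\Omega}$ is upper $d$-regular. The mechanism is that the strict inequality $d>N-2$ produces a genuine smoothing: in the $d$-regular case the trace lands in a Besov-type space $B^{2,2}_\beta(\partial\Omega,\mu_{\partial\Omega})$ with exponent $\beta=1-(N-d)/2$, and $\beta>0$ is equivalent to $d>N-2$, which is precisely the threshold that makes the embedding into $L^2(\partial\Omega,\mu_{\partial\Omega})$ compact once $\mu_{\partial\Omega}$ is finite. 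Equivalently, one may establish an interpolation inequality of the form $\norm{\operatorname{Tr} f}_{L^2(\partial\Omega,\mu_{\partial\Omega})}^2\le \varepsilon\,\norm{f}_{W^{1,2}(\Omega)}^2+C_\varepsilon\norm{f}_{L^2(\Omega)}^2$ and combine it with the compact Rellich embedding $W^{1,2}(\Omega)\hookrightarrow L^2(\Omega)$, which is available since $\Omega$ is a bounded extension domain: a bounded sequence in $W^{1,2}(\Omega)$ then possesses a subsequence that is Cauchy in $L^2(\partial\Omega,\mu_{\partial\Omega})$. Rather than redo either estimate I would invoke the compactness assertion of \cite[Theorem 5.1]{HINZ-2020} together with the finiteness of $\mu_{\partial\Omega}$.

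The remaining claims are then purely functional-analytic. The kernel $K:=\ker \operatorname{Tr}$ is a closed subspace of the Hilbert space $W^{1,2}(\Omega)$, so the quotient $W^{1,2}(\Omega)/K$ is a Hilbert space whose norm is exactly the stated infimum $\norm{\varphi}_{\operatorname{Tr}(W^{1,2}(\Omega))}$; transporting this norm through the algebraic bijection onto $\operatorname{Tr}(W^{1,2}(\Omega))$ makes the latter a Hilbert space, and under this identification the compactness of the embedding $\operatorname{Tr}(W^{1,2}(\Omega))\subset L^2(\partial\Omega,\mu_{\partial\Omega})$ is the compactness of $\operatorname{Tr}$ just established. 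For (ii) I would define $H_{\partial\Omega}\varphi$ to be the unique minimal-norm preimage of $\varphi$, that is the element of $K^\perp$ with $\operatorname{Tr}(H_{\partial\Omega}\varphi)=\varphi$; identifying $K^\perp$ isometrically with $W^{1,2}(\Omega)/K$ shows that $H_{\partial\Omega}$ is linear and satisfies $\norm{H_{\partial\Omega}\varphi}_{W^{1,2}(\Omega)}=\norm{\varphi}_{\operatorname{Tr}(W^{1,2}(\Omega))}$, hence has norm one.
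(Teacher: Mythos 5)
Your proposal is correct and follows essentially the same route as the paper, which likewise obtains the trace estimate and compactness from \cite[Theorem 5.1]{HINZ-2020} (resting on \cite{BIEGERT-2009} and \cite{AH96}) plus the finiteness of $\mu_{\partial\Omega}$, and then handles the image space and extension operator by the quotient/orthogonal-complement argument. Your minimal-norm preimage in $(\ker\operatorname{Tr})^\perp$ is exactly the paper's $1$-harmonic extension ($\Delta w=w$ weakly is the condition of $W^{1,2}$-orthogonality to the kernel), so part (ii) coincides as well.
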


The trace of vector fields or tensors we understand in the component-wise sense.

\begin{remark}\mbox{}
\begin{enumerate}
\item[(i)] The trace operator is defined by $\operatorname{Tr} f:=\widetilde{g}$, where 
\begin{equation}\label{E:pointwiseredef}
\widetilde{g}(x)=\lim_{r\to 0}\frac{1}{\lambda^N(B(x,r))}\int_{B(x,r)}g(y)dy
\end{equation}
is the pointwise redefinition of an extension $g\in W^{1,2}(\mathbb{R}^N)$ of $f$. By the Lebesgue differentiation theorem this limit 
exists outside a $\lambda^N$-null set, but due to the Sobolev regularity of $g$ it exists outside a much smaller set, as can be  made precise using capacities, \cite{AH96, MAZ'JA-1985}. Since $\mu_{\partial\Omega}$ satisfies \eqref{E:upperreg} with $d$ as stated, the set of points of $\partial\Omega$ where this limit exists is of full $\mu_{\partial\Omega}$-measure, \cite[Section 7]{AH96}. The independence of the chosen extension is proved in \cite[Theorem 6.1]{BIEGERT-2009}, another proof is given in \cite[Theorem~1]{WALLIN-1991}. 
\item[(ii)] The extension operator $H_{\partial\Omega}$ is defined as the $1$-harmonic extension: Given $\varphi\in \operatorname{Tr}(W^{1,2}(\Omega))$, $H_{\partial\Omega}\varphi$ is the (unique) element $w$ of $W^{1,2}(\Omega)$ such that $\operatorname{Tr} w=\varphi$ and $\Delta w=w$ in $\Omega$ in the weak sense. Details can be found in \cite[Section~5]{HINZ-2020}. 
\item[(iii)] Clearly the Hausdorff dimension of $\partial\Omega$ is at least $N-1$. If the maximal possible exponent $d$ in \eqref{E:upperreg} is less than $N-1$ then $\mu_{\partial\Omega}$ is singular with respect to the $(N-1)$-dimensional Hausdorff measure $\mathcal{H}^{N-1}$ and assigns positive mass also to some parts of $\partial\Omega$ that have Hausdorff dimension less than $N-1$.
\end{enumerate}
\end{remark}

Let $\Omega$ and $\mu_{\partial\Omega}$ be as in Theorem \ref{ThTracecheap}. For a fixed Borel subset $\Gamma_{\mathrm{Dir}}$ of $\partial\Omega$ we also consider the closed subspace
\[V(\Omega,\Gamma_{\mathrm{Dir}})=\{v\in W^{1,2}(\Omega)|\; \operatorname{Tr} v =0\ \text{$\mu_{\partial\Omega}$-a.e. on $\Gamma_{\mathrm{Dir}}$}\}\]
of $W^{1,2}(\Omega)$.

Now suppose that $\Gamma_{\mathrm{Dir}}$ and $\Gamma_{\mathrm{Neu}}$ are two Borel subsets of $\partial\Omega$ such that $\partial\Omega=\Gamma_{\mathrm{Dir}}\cup \Gamma_{\mathrm{Neu}}$ and $\mu_{\partial\Omega}(\Gamma_{\mathrm{Dir}}\cap \Gamma_{\mathrm{Neu}})=0$. Then a rigorous meaning can be given to the system~\eqref{Eq} in terms of the following variational formulation. We say that a vector field $\u\in V(\Omega,\Gamma_{\mathrm{Dir}})^N$ is a \emph{weak solution} to~\eqref{Eq} with data $\mathbf{f}\in L^2(\Omega)^N$ and $\mathbf{g}\in L^2(\Gamma_{\mathrm{Neu}})^N$ if 
\begin{multline}\label{EqVF}
\int_\Omega \sigma(\u):e(\mathbf{\theta})\dx=\int_{\Omega} \mathbf{f}\cdot \mathbf{\theta} \dx +\int_{\Gamma_{\mathrm{Neu}}} \mathbf{g}\cdot \operatorname{Tr} \mathbf{\theta}\:d \mu_{\partial\Omega}, \quad \mathbf{\theta}\in V(\Omega,\Gamma_{\mathrm{Dir}})^N.
\end{multline}
A generalized Green formula guarantees that in cases when $\Omega$, $A$, $\mathbf{f}$ and $\mathbf{g}$ are smooth and  classical solutions exist, these are also weak solutions, see formula \eqref{EqGreen} of Proposition \ref{P:Greenformula} in Appendix~\ref{AppDef}.

We establish the existence of weak solutions for a specific class of $W^{1,2}$-extension domains. Recall that a domain $\Omega$ of $\R^N$ is an $(\eps,\infty)$-domain, $\eps > 0$, if for any $x,y\in \Omega$ there is a rectifiable arc $\gamma\subset \Omega$ with length $\ell(\gamma)$ joining $x$ to $y$ and satisfying
\begin{enumerate}
 \item $\ell(\gamma)\le \frac{|x-y|}{\eps}$ and
 \item $d(z,\del \Omega)\ge \eps |x-z|\frac{|y-z|}{|x-y|}$ for $z\in \gamma$. 
\end{enumerate}
By ~\cite[Theorem 1]{JONES-1981} any $(\eps,\infty)$-domain is a $W^{1,2}$-extension domain.

%
%

We quote the following special case of the Korn inequality proved in~\cite[Theorem 2.1]{DURAN-2004}. A more standard version for bounded Lipschitz domains can be found in \cite[Theorem 6.3-3]{CIARLET-1988}.
\begin{proposition}
	Let $\Omega\subset \mathbb{R}^n$ be a bounded $(\eps,\infty)$-domain. There is a constant $$C_K(\varepsilon,N,\mathrm{diam}(\Omega))>0$$ depending only on $\eps$, $N$ and $\mathrm{diam} (\Omega)$ such that
	\begin{equation}\label{EqKornG}
	\|\u \|_{W^{1,2}(\Omega)^N}\le C_K(\varepsilon,N,\mathrm{diam}(\Omega))\left(\|\u\|_{L^2(\Omega)^N}+\|e(\u)\|_{L^2(\Omega)^{N\times N}} \right)
\end{equation}
for all $\u \in W^{1,2}(\Omega)^N$. The norm $\u \to \left(\|\u\|_{L^2(\Omega)^N}^2+\|e(\u)\|_{L^2(\Omega)^{N\times N}}^2 \right)^\frac{1}{2}$ is equivalent to $\|\cdot\|_{W^{1,2}(\Omega)^N}$ on $W^{1,2}(\Omega)^N$.
\end{proposition}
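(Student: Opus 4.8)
Since this Proposition is recorded as a special case of \cite[Theorem 2.1]{DURAN-2004}, one option is simply to invoke that reference; below I outline the argument one would reconstruct. First note that the asserted norm equivalence is immediate from \eqref{EqKornG}: the bound $\|\u\|_{L^2(\Omega)^N}+\|e(\u)\|_{L^2(\Omega)^{N\times N}}\le \sqrt{2}\,\|\u\|_{W^{1,2}(\Omega)^N}$ holds trivially because $e(\u)$ is assembled from first order derivatives of $\u$, while \eqref{EqKornG} is the reverse bound, and passing between a sum of two norms and the square root of the sum of their squares costs only a factor $\sqrt{2}$. So the entire content sits in \eqref{EqKornG}, the second Korn inequality.

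The plan is to reduce \eqref{EqKornG} to a negative-norm (Ne\v{c}as type) inequality and then to establish that inequality from the geometry of $(\eps,\infty)$-domains. The reduction rests on the distributional identity
\begin{equation}\label{EqKornIdentity}
\partial_j\partial_k u_i=\partial_j e_{ik}(\u)+\partial_k e_{ij}(\u)-\partial_i e_{jk}(\u),\qquad \u\in W^{1,2}(\Omega)^N,
\end{equation}
which expresses every second derivative of $\u$ through first derivatives of $e(\u)$. Granting the Ne\v{c}as inequality
\begin{equation}\label{EqNecas}
\|f\|_{L^2(\Omega)}\le C\big(\|f\|_{W^{-1,2}(\Omega)}+\|\nabla f\|_{W^{-1,2}(\Omega)^N}\big),\qquad f\in L^2(\Omega),
\end{equation}
I would apply \eqref{EqNecas} to $f=\partial_k u_i$: the term $\|\partial_k u_i\|_{W^{-1,2}}$ is dominated by $\|u_i\|_{L^2(\Omega)}$, while by \eqref{EqKornIdentity} each component of $\nabla\partial_k u_i$ is a first derivative of a component of $e(\u)$, so $\|\nabla\partial_k u_i\|_{W^{-1,2}}\le C\|e(\u)\|_{L^2}$. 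Summing over $i,k$ controls $\|\nabla\u\|_{L^2}$ by the right-hand side of \eqref{EqKornG}. This step is purely functional analytic and uses nothing about $\Omega$ beyond the validity of \eqref{EqNecas}.

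The real work is \eqref{EqNecas}, which is equivalent by duality to the solvability of the divergence equation: for every $g\in L^2(\Omega)$ with $\int_\Omega g\,\dx=0$ there is $\mathbf{v}\in W^{1,2}_0(\Omega)^N$ with $\div\mathbf{v}=g$ and $\|\mathbf{v}\|_{W^{1,2}(\Omega)^N}\le C\|g\|_{L^2(\Omega)}$, the constant $C$ being the quantity that eventually feeds into $C_K$. Here the hypothesis that $\Omega$ is a bounded $(\eps,\infty)$-domain enters decisively: such domains are in particular John domains and admit a Whitney-type (Boman) chain decomposition into countably many overlapping subdomains, each star-shaped with respect to a ball, satisfying a uniformly controlled chain condition whose parameters are governed precisely by $\eps$, $N$ and $\diam(\Omega)$. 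On each star-shaped piece the classical Bogovskii integral operator furnishes an explicit bounded right inverse of the divergence, and the construction of Dur\'an--Muschietti then glues these local solutions by a subordinate partition of unity, correcting the defect in the divergence by redistributing the local mean values along the chain; the chain condition keeps this redistribution bounded in $L^2$.

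The main obstacle is exactly this gluing step together with the bookkeeping of constants, since one must show that the accumulated $W^{1,2}$-norm of $\mathbf{v}$ is controlled by $\|g\|_{L^2}$ with a final constant depending only on $\eps$, $N$ and $\diam(\Omega)$, and not on finer features of $\partial\Omega$ (which may be fractal). It is worth stressing why the extension property does not shortcut the proof: although an $(\eps,\infty)$-domain is a $W^{1,2}$-extension domain, extension does not commute with the symmetric gradient, so bounding $\|\nabla\u\|_{L^2(\Omega)}$ through the whole-space estimate $\|\nabla E\u\|_{L^2(\R^N)}\le \sqrt{2}\,\|e(E\u)\|_{L^2(\R^N)}$ is useless: its right-hand side is only controlled by $\|E\u\|_{W^{1,2}(\R^N)}\le C\|\u\|_{W^{1,2}(\Omega)}$, which is circular. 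This is precisely why the divergence-equation route, sensitive to the interior chain geometry rather than to boundary regularity, is the one that works.
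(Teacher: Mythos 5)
Your proposal is correct in outline, but it reconstructs a genuinely different proof from the one the paper relies on. The paper offers no argument of its own here: the Proposition is stated as a quotation of \cite[Theorem 2.1]{DURAN-2004}, and the proof given there is an \emph{extension} argument in the spirit of Jones --- Whitney cubes of $\Omega$ and of its complement, reflected cubes, and chains controlled by the $(\varepsilon,\infty)$ condition --- with the polynomial approximations replaced by projections onto infinitesimal rigid motions, i.e.\ onto the kernel of $e$. This yields an extension operator bounded for the norm $\mathbf{u}\mapsto\|\mathbf{u}\|_{L^{2}}+\|e(\mathbf{u})\|_{L^{2}}$, after which the elementary Korn inequality on all of $\mathbb{R}^N$ gives \eqref{EqKornG}. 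Your closing remark that ``the extension property does not shortcut the proof'' is therefore only half right: the generic $W^{1,2}$-extension is indeed circular for the reason you give, but the cited proof is precisely an extension argument, with the extension rebuilt so as to respect the symmetric gradient. Your alternative route --- the identity expressing $\partial_j\partial_k u_i$ through first derivatives of $e(\mathbf{u})$, the Ne\v{c}as inequality, and its dual formulation as solvability of $\operatorname{div}\mathbf{v}=g$ in $W^{1,2}_0(\Omega)^N$ via Bogovskii operators glued along a Boman chain --- is the approach used by Acosta, Dur\'an and Muschietti for John domains, and it does apply here because a bounded $(\varepsilon,\infty)$-domain is a John domain with constant controlled by $\varepsilon$ and $N$ (the uniform condition forces the inradius to be at least a fixed multiple of $\operatorname{diam}(\Omega)$). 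What your route buys is greater generality, since John domains form a strictly larger class than uniform domains; what it costs is two small steps you should make explicit: the verification of the John property with quantitative constants, and the passage from the scale-invariant estimate $\|f-\bar f\|_{L^2(\Omega)}\le C\|\nabla f\|_{W^{-1,2}(\Omega)^N}$ to the inhomogeneous Ne\v{c}as inequality, which requires controlling the mean value $\bar f$ by pairing against a fixed cut-off function. Neither is a gap of substance, so the proposal stands as a correct proof along a different path than the reference the paper invokes.
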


If $\Omega$ is a bounded $(\eps,\infty)$-domain then the embedding $W^{1,2}(\Omega)\to L^2(\Omega)$ is compact. (This is actually true for all bounded $W^{1,2}$-extension domains~\cite{ARFI-2017,ROZANOVA-PIERRAT-2020}.) Combining the compactness of this embedding and \eqref{EqKornG} one obtains the following equivalence of the norms on $V(\Omega,\Gamma_{\mathrm{Dir}})^N$. 

\newpage
\begin{lemma}\label{L:equivnorms}\mbox{}
\begin{enumerate}
\item[(i)] Let $\Omega\subset \R^N$ be a bounded $(\eps,\infty)$-domain and $\mu_{\partial\Omega}$ a Borel measure with $\supp \mu_{\partial\Omega}=\partial\Omega$ and such that \eqref{E:upperreg} holds with some $N-2<d\leq N$. Suppose that $\Gamma_{\mathrm{Dir}}$ is a Borel subset of $\partial\Omega$ and $\mu_{\partial\Omega}(\Gamma_{\mathrm{Dir}})>0$. Then there is a constant $c_K(\Omega,\mu_{\partial\Omega},\Gamma_{\mathrm{Dir}})>0$ such that 
	\begin{equation}\label{EqeqivNormV}
		c_K(\Omega,\mu_{\partial\Omega},\Gamma_{\mathrm{Dir}})\: \|\u\|_{W^{1,2}(\Omega)^N}\le \|e(\u)\|_{L^2(\Omega)^{N\times N}}\le \|\u\|_{W^{1,2}(\Omega)^N}
	\end{equation}
	for all $\u \in V(\Omega,\Gamma_{\mathrm{Dir}})^N$.
	\item[(ii)] Let $\Omega\subset D\subset \R^N$ be bounded $(\eps,\infty)$-domains and $\mu_{\partial D}$, $\mu_{\partial\Omega}$ Borel measures with $\supp \mu_{\partial D}=\partial D$ and $\supp \mu_{\partial\Omega}=\partial\Omega$ and such that for both \eqref{E:upperreg} holds with some $N-2<d\leq N$. Suppose that $\Gamma_{\mathrm{Dir}}$ is a Borel subset of $\partial D\cap \partial\Omega$ with $\mu_{\partial D}(\Gamma_{\mathrm{Dir}})>0$ and $\mu_{\partial\Omega}|_{\Gamma_{\mathrm{Dir}}}=\mu_{\partial D}|_{\Gamma_{\mathrm{Dir}}}$. Then the constant $c_K(\Omega,\mu_{\partial\Omega},\Gamma_{\mathrm{Dir}})$ in \eqref{EqeqivNormV} can be replaced by another constant $c_K(\varepsilon, N, D,\mu_{\partial D},\Gamma_{\mathrm{Dir}})>0$ depending only on $\varepsilon$, $N$, $D$, $\mu_{\partial D}$ and $\Gamma_{\mathrm{Dir}}$. Moreover, the Poincar\'e inequality
	\begin{equation}\label{E:Poincare}
	\int_\Omega \u \cdot \u\: \dx\leq C_P(\varepsilon, N, D, \mu_{\partial D},\Gamma_{\mathrm{Dir}})\int_\Omega \nabla \u : \nabla \u\: \dx
	\end{equation}
	holds for all $\u \in V(\Omega,\Gamma_{\mathrm{Dir}})^N$, where $C_P(\varepsilon, N, D, \mu_{\partial D},\Gamma_{\mathrm{Dir}})>0$ is a constant depending only on  $\varepsilon$, $N$, $D$, $\mu_{\partial D}$ and $\Gamma_{\mathrm{Dir}}$.
	\end{enumerate}
\end{lemma}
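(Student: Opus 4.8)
The plan is to obtain the right-hand inequality of \eqref{EqeqivNormV} by a pointwise estimate and the left-hand inequality, which is the substantial one, by combining the Korn inequality \eqref{EqKornG} with a compactness argument. The right inequality is immediate: since $2e_{ij}(\u)=\partial_i u_j+\partial_j u_i$ one has $|e(\u)|^2\le|\nabla\u|^2$ pointwise, so $\|e(\u)\|_{L^2(\Omega)^{N\times N}}\le\|\nabla\u\|_{L^2(\Omega)^N}\le\|\u\|_{W^{1,2}(\Omega)^N}$. For the left inequality it suffices, in view of \eqref{EqKornG}, to establish a Korn--Poincar\'e estimate $\|\u\|_{L^2(\Omega)^N}\le C\,\|e(\u)\|_{L^2(\Omega)^{N\times N}}$ for all $\u\in V(\Omega,\Gamma_{\mathrm{Dir}})^N$; feeding this into \eqref{EqKornG} bounds $\|\u\|_{W^{1,2}}$ by a multiple of $\|e(\u)\|_{L^2}$ and produces the constant $c_K$.

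I would prove this Korn--Poincar\'e estimate by contradiction. If it fails, there is a sequence $(\u_k)\subset V(\Omega,\Gamma_{\mathrm{Dir}})^N$ with $\|\u_k\|_{L^2(\Omega)^N}=1$ and $\|e(\u_k)\|_{L^2(\Omega)^{N\times N}}\to 0$. By \eqref{EqKornG} the sequence is bounded in $W^{1,2}(\Omega)^N$, so after passing to a subsequence $\u_k\rightharpoonup\u$ weakly in $W^{1,2}(\Omega)^N$, and since $\Omega$ is a bounded $(\eps,\infty)$-domain the embedding $W^{1,2}(\Omega)\to L^2(\Omega)$ is compact, so $\u_k\to\u$ strongly in $L^2(\Omega)^N$; in particular $\|\u\|_{L^2}=1$. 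As $e$ is bounded and linear, hence weakly continuous, $e(\u_k)\rightharpoonup e(\u)$, while $e(\u_k)\to 0$ strongly; therefore $e(\u)=0$. Since $V(\Omega,\Gamma_{\mathrm{Dir}})^N$ is a closed, hence weakly closed, subspace, the limit satisfies $\u\in V(\Omega,\Gamma_{\mathrm{Dir}})^N$, i.e. $\operatorname{Tr}\u=0$ $\mu_{\partial\Omega}$-a.e. on $\Gamma_{\mathrm{Dir}}$.

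The heart of the argument is to deduce $\u=0$ from these two facts, which contradicts $\|\u\|_{L^2}=1$. Because $\Omega$ is connected, $e(\u)=0$ forces $\u$ to be an infinitesimal rigid motion $\u(x)=a+Bx$ with $a\in\R^N$ and $B$ skew-symmetric. If $\u\not\equiv 0$, its zero set $Z=\{x:Bx=-a\}$ is either empty or an affine subspace that is a translate of $\ker B$; since $B$ is skew-symmetric its rank is even, so when $B\neq 0$ one has $\operatorname{rank}B\ge 2$ and $\dim Z\le N-2$, while for $B=0$, $a\neq 0$ the set $Z$ is empty. In every case the Hausdorff dimension of $Z$ is at most $N-2<d$, so $\mathcal{H}^d(Z)=0$, and a standard covering argument using the upper $d$-regularity \eqref{E:upperreg} gives $\mu_{\partial\Omega}(Z)=0$. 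As $\u$ is affine it is continuous and its trace is its pointwise restriction, so $\operatorname{Tr}\u=0$ on $\Gamma_{\mathrm{Dir}}$ would force $\mu_{\partial\Omega}$-almost every point of $\Gamma_{\mathrm{Dir}}$ to lie in $Z$, whence $\mu_{\partial\Omega}(\Gamma_{\mathrm{Dir}})\le\mu_{\partial\Omega}(Z)=0$, contradicting $\mu_{\partial\Omega}(\Gamma_{\mathrm{Dir}})>0$. Hence $\u=0$, the desired contradiction, and part (i) follows.

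For part (ii) the task is to make all constants independent of $\Omega$. First, $\Omega\subset D$ gives $\diam\Omega\le\diam D$, and since the Korn constant in \eqref{EqKornG} can be bounded in terms of $\eps$, $N$ and the diameter, it may be replaced by a uniform constant $C_K(\eps,N,\diam D)$. It then remains to produce the Korn--Poincar\'e estimate and the Poincar\'e inequality \eqref{E:Poincare} with constants depending only on $\eps,N,D,\mu_{\partial D},\Gamma_{\mathrm{Dir}}$. I would do this by reduction to the fixed reference domain $D$: using a uniform extension operator for confined $(\eps,\infty)$-domains, extend $\u\in V(\Omega,\Gamma_{\mathrm{Dir}})^N$ to some $U\in W^{1,2}(D)^N$; because $\Gamma_{\mathrm{Dir}}\subset\partial\Omega\cap\partial D$ with $\mu_{\partial\Omega}|_{\Gamma_{\mathrm{Dir}}}=\mu_{\partial D}|_{\Gamma_{\mathrm{Dir}}}$, the trace of $U$ still vanishes on $\Gamma_{\mathrm{Dir}}$, so $U\in V(D,\Gamma_{\mathrm{Dir}})^N$ and one may invoke the now-fixed inequalities of part (i) on $D$. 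The main obstacle is precisely this reduction: the extension introduces energy on $D\setminus\overline{\Omega}$ that is not a priori controlled by $\|e(\u)\|_{L^2(\Omega)}$ or by $\|\nabla\u\|_{L^2(\Omega)}$, so one must use an extension whose homogeneous (Dirichlet-energy) norm is bounded uniformly in $\Omega$, or equivalently run the contradiction argument along a sequence $\Omega_k\subset D$ and control the limit; this is the delicate point treated in Appendix~\ref{S:technical}.
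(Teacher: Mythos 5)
Part (i) of your argument is correct and is essentially the paper's proof: the authors simply cite step (iii) of the proof of \cite[Theorem 6.3-4]{CIARLET-1988}, which is exactly your compactness--contradiction scheme (Korn's inequality \eqref{EqKornG}, compactness of $W^{1,2}(\Omega)\to L^2(\Omega)$ for bounded extension domains, identification of the weak limit as a rigid motion vanishing on $\Gamma_{\mathrm{Dir}}$). Your explicit justification that a nonzero rigid motion cannot have vanishing trace on a set of positive $\mu_{\partial\Omega}$-measure --- its zero set is an affine set of dimension at most $N-2<d$, while upper $d$-regularity \eqref{E:upperreg} forces any set of positive $\mu_{\partial\Omega}$-measure to have Hausdorff dimension at least $d$ --- is a correct and welcome elaboration of exactly the point where the hypothesis $d>N-2$ enters.

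In part (ii) you have the right architecture, which is also the paper's (extend by the Jones operator, check that the extension lies in $V(D,\Gamma_{\mathrm{Dir}})^N$, apply part (i) and a Poincar\'e inequality on the fixed domain $D$), but two steps are left open and one of them is the actual content. First, the claim that ``the trace of $U$ still vanishes on $\Gamma_{\mathrm{Dir}}$'' is not automatic from $\mu_{\partial\Omega}|_{\Gamma_{\mathrm{Dir}}}=\mu_{\partial D}|_{\Gamma_{\mathrm{Dir}}}$: the trace on $\partial D$ is computed from the pointwise redefinition of $E_\Omega \u$ via the limits \eqref{E:pointwiseredef}, and showing that this redefinition vanishes $\mu_{\partial D}$-a.e.\ on $\Gamma_{\mathrm{Dir}}$ is precisely the statement \eqref{E:itreallyvanishes} proved by the capacity argument of Appendix~\ref{S:technical} --- that appendix is about the trace of the extension, not about its energy, as you suggest. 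Second, and more substantively, the uniform homogeneous extension bound you call for is available (Jones's estimate \eqref{E:extensions}), but it controls $\left\|\nabla E_\Omega \u\right\|_{L^2(D)}$ by $\left\|\nabla \u\right\|_{L^2(\Omega)}$; running your chain through $D$ with this input yields the Poincar\'e inequality \eqref{E:Poincare} but not the uniform constant in \eqref{EqeqivNormV}, because for the latter one needs $\left\|e(E_\Omega\u)\right\|_{L^2(D)}\le C(\varepsilon,N)\left\|e(\u)\right\|_{L^2(\Omega)}$, a seminorm estimate for the symmetric gradient that does not follow from the pointwise bound $|e(v)|\le|\nabla v|$, since $\left\|\nabla\u\right\|_{L^2(\Omega)}$ is not controlled by $\left\|e(\u)\right\|_{L^2(\Omega)}$ alone. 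The paper asserts this estimate as a consequence of \eqref{E:extensions} (``by adding two inequalities'') and then closes the argument by combining it with \eqref{E:PD} and part (i) applied to $D$; your proposal flags the obstacle but does not supply this missing estimate, so the left inequality of \eqref{EqeqivNormV} with a constant depending only on $\varepsilon$, $N$, $D$, $\mu_{\partial D}$, $\Gamma_{\mathrm{Dir}}$ is not yet proved.
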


\begin{proof}
Statement (i) can be proved by the same arguments as used in \cite[Theorem 6.3-4, step (iii) in the proof]{CIARLET-1988}. To see (ii) recall that according to \cite[Theorem 2]{JONES-1981} there exists an extension operator $E_\Omega$ taking $\u$ into a locally integrable function $E_\Omega \u$ on $\mathbb{R}^N$ with gradient in $L^2(\mathbb{R}^N)^{N\times N}$ such that 
\begin{equation}\label{E:extensions}
\left\|\nabla E_\Omega \u\right\|_{L^2(D)^{N\times N}}\leq \left\|\nabla E_\Omega \u\right\|_{L^2(\mathbb{R}^N)^{N\times N}}
\leq C_{\ext}(\varepsilon, N)\left\|\nabla \u\right\|_{L^2(\Omega)^{N\times N}},
\end{equation}
where $C_{\ext}(\varepsilon, N)$ is a constant depending only on $\varepsilon$ and $N$. By adding two inequalities we can see the same holds with $e(\u)$ in place of $\nabla \u$. The pointwise redefinition $\widetilde{E_\Omega \u}$ of $E_\Omega \u$ in the sense of \eqref{E:pointwiseredef} satisfies 
\begin{equation}\label{E:itreallyvanishes}
\widetilde{E_\Omega \u}=0\quad \text{$\mu_{\partial D}$-a.e. on $\Gamma_{\mathrm{Dir}}$,}
\end{equation} 
see Appendix~\ref{S:technical} for details of the brief argument. Using \eqref{E:itreallyvanishes} and the boundedness of $D$ one can then follow a standard pattern, see \cite[Proposition 7.1]{Egert2015} or \cite{EVANS-2010}, to obtain the Poincar\'e inequality
\begin{equation}\label{E:PD}
\int_D \ext_\Omega \u \cdot \ext_\Omega\u\: \dx\leq C_P(\varepsilon, N, D, \mu_{\partial D},\Gamma_{\mathrm{Dir}})\int_D \nabla \ext_\Omega \u : \nabla \ext_\Omega \u\: \dx
\end{equation}
for all $\u \in V(\Omega,\Gamma_{\mathrm{Dir}})^N$. An application of (i) to $D$ yields 
\begin{align}
c_K(D,\mu_{\partial D},\Gamma_{\mathrm{Dir}})\left\|\nabla \ext_\Omega\u\right\|_{L^2(D)^{N\times N}}&\leq \left\|e(\ext_\Omega\u)\right\|_{L^2(D)^{N\times N}}\notag\\
&\leq C_{\ext}(\varepsilon, N)\left\|e(\u)\right\|_{L^2(\Omega)^{N\times N}},\notag
\end{align}
and combining with \eqref{E:PD}, we arrive at
\[\left\|\u \right\|_{L^2(\Omega)^N}\leq \frac{C_P(D, \mu_{\partial D},\Gamma_{\mathrm{Dir}})^{1/2} C_{\ext}(\varepsilon, N)}{c_K(D,\mu_{\partial D},\Gamma_{\mathrm{Dir}})}\left\|e(\u)\right\|_{L^2(\Omega)^{N\times N}}.\]
Plugging this into \eqref{EqKornG}, we obtain the first claim in (ii). Slight modifications of the preceding arguments also give \eqref{E:Poincare}.
\end{proof}



We obtain the following well-posedness result:
\begin{theorem}\label{ThWelPos}

Let $\Omega\subset \mathbb{R}^N$ be a bounded $(\varepsilon,\infty)$-extension domain. Suppose that $\mu_{\partial\Omega}$ is a Borel measure with $\supp \mu_{\partial\Omega}=\partial\Omega$ and such that \eqref{E:upperreg} holds with some $N-2<d\leq N$. Suppose that $\Gamma_{\mathrm{Dir}}$ and $\Gamma_{\mathrm{Neu}}$ are Borel subsets of $\partial\Omega$ such that $\partial\Omega=\Gamma_{\mathrm{Dir}}\cup \Gamma_{\mathrm{Neu}}$, $\mu_{\partial\Omega}(\Gamma_{\mathrm{Dir}}\cap \Gamma_{\mathrm{Neu}})=0$ and $\mu_{\partial\Omega}(\Gamma_{\mathrm{Dir}})>0$.

 Then for all $\mathbf{g}\in [\operatorname{Tr}(W^{1,2}(\Omega))]^N$ and $\mathbf{f}\in L^2(\Omega)^N$ there is a unique weak solution $\u\in V(\Omega,\Gamma_{\mathrm{Dir}})^N$ of~\eqref{EqVF}. Moreover, there is a constant $C>0$ such that
	\begin{equation}
		\label{EqEstimAPriori}
		\|\u\|_{W^{1,2}(\Omega)^N}\le C\left(\|\mathbf{f}\|_{L^2(\Omega)^N}+\|\mathbf{g}\|_{[\operatorname{Tr}(W^{1,2}(\Omega))]^N} \right) 
	\end{equation}

\end{theorem}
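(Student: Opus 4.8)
The plan is to cast \eqref{EqVF} as a variational problem on the Hilbert space $H:=V(\Omega,\Gamma_{\mathrm{Dir}})^N$, equipped with the $W^{1,2}(\Omega)^N$-norm, and to apply the Lax--Milgram theorem. I introduce the bilinear form $a(\u,\theta):=\int_\Omega \sigma(\u):e(\theta)\,\dx=\int_\Omega (Ae(\u)):e(\theta)\,\dx$ and the linear functional $L(\theta):=\int_\Omega \mathbf{f}\cdot\theta\,\dx+\int_{\Gamma_{\mathrm{Neu}}}\mathbf{g}\cdot\operatorname{Tr}\theta\,d\mu_{\partial\Omega}$, so that a weak solution is exactly a $\u\in H$ with $a(\u,\theta)=L(\theta)$ for all $\theta\in H$. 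Since $\Gamma_{\mathrm{Dir}}$ is Borel and $\operatorname{Tr}$ is continuous, $H$ is a closed subspace of $W^{1,2}(\Omega)^N$, hence itself a Hilbert space, as already noted before the statement.

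Next I would check that $a$ is bounded and coercive on $H$. Boundedness is routine: because $A\in L^\infty(\Omega,\mathcal{M}_N^s(\alpha,\beta))$ one has the pointwise estimate $|(Ae(\u)):e(\theta)|\le \|A\|_\infty\,|e(\u)|\,|e(\theta)|$, and the right inequality in \eqref{EqeqivNormV} gives $\|e(\cdot)\|_{L^2}\le\|\cdot\|_{W^{1,2}}$, whence $|a(\u,\theta)|\lesssim \|\u\|_{W^{1,2}}\|\theta\|_{W^{1,2}}$. For coercivity I would first use the ellipticity encoded in $\mathcal{M}_N^s(\alpha,\beta)$: writing the full contraction column-by-column and applying $A\xi\cdot\xi\ge\alpha|\xi|^2$ to each column of $e$ yields $(Ae):e\ge\alpha|e|^2$ pointwise, so that $a(\u,\u)\ge\alpha\|e(\u)\|_{L^2}^2$. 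The decisive step is then to convert this into control of the full norm via the norm equivalence of Lemma \ref{L:equivnorms}(i), which gives $\|e(\u)\|_{L^2}\ge c_K\|\u\|_{W^{1,2}}$ on $H$ and hence $a(\u,\u)\ge \alpha c_K^2\|\u\|_{W^{1,2}}^2$.

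It remains to see that $L$ is a bounded functional on $H$. The interior term is handled by Cauchy--Schwarz, $|\int_\Omega\mathbf{f}\cdot\theta\,\dx|\le\|\mathbf{f}\|_{L^2}\|\theta\|_{W^{1,2}}$. For the boundary term I would invoke Theorem \ref{ThTracecheap}(i): since each component of $\mathbf{g}$ lies in $\operatorname{Tr}(W^{1,2}(\Omega))\subset L^2(\partial\Omega,\mu_{\partial\Omega})$ and $\|\operatorname{Tr}\theta\|_{L^2(\mu_{\partial\Omega})}\le c_{\operatorname{Tr}}\|\theta\|_{W^{1,2}}$, Cauchy--Schwarz on $\Gamma_{\mathrm{Neu}}$ gives a bound by $\|\mathbf{g}\|_{L^2(\mu_{\partial\Omega})}\,c_{\operatorname{Tr}}\|\theta\|_{W^{1,2}}$; taking the infimum over $W^{1,2}$-extensions of $\mathbf{g}$ in the trace inequality shows $\|\mathbf{g}\|_{L^2(\mu_{\partial\Omega})}\le c_{\operatorname{Tr}}\|\mathbf{g}\|_{[\operatorname{Tr}(W^{1,2}(\Omega))]^N}$, so $|L(\theta)|\lesssim(\|\mathbf{f}\|_{L^2}+\|\mathbf{g}\|_{[\operatorname{Tr}(W^{1,2}(\Omega))]^N})\|\theta\|_{W^{1,2}}$. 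Lax--Milgram (or, since $A$ is symmetric, the Riesz representation theorem) then yields a unique $\u\in H$; testing with $\theta=\u$ and combining coercivity with the bound on $L$ gives \eqref{EqEstimAPriori} with $C=\max(1,c_{\operatorname{Tr}}^2)/(\alpha c_K^2)$.

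The genuine mathematical content sits in the coercivity, and more precisely in Lemma \ref{L:equivnorms}(i); this is where the hypothesis $\mu_{\partial\Omega}(\Gamma_{\mathrm{Dir}})>0$ is essential, since it rules out nonzero rigid motions in the kernel of $e(\cdot)$ on $H$ and thereby upgrades Korn's inequality \eqref{EqKornG} to the two-sided estimate \eqref{EqeqivNormV}. Granting that lemma, the remaining verifications are standard, so I expect no obstacle beyond the bookkeeping of constants.
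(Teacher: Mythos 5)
Your proposal is correct and follows essentially the same route as the paper: Lax--Milgram on $V(\Omega,\Gamma_{\mathrm{Dir}})^N$, coercivity from the ellipticity bound $M\xi\cdot\xi\ge\alpha|\xi|^2$ combined with the norm equivalence \eqref{EqeqivNormV} of Lemma \ref{L:equivnorms}(i), and continuity of the right-hand side via Theorem \ref{ThTracecheap}, which also yields \eqref{EqEstimAPriori}. You merely spell out the constants and the column-wise reduction of $(Ae):e\ge\alpha|e|^2$ that the paper leaves implicit.
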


\begin{proof} 
	Theorem \ref{ThWelPos} is a direct consequence of the Lax-Milgram Lemma. Indeed, by the definition of $\sigma(\u)$ and \eqref{EqeqivNormV},
	\[\int_\Omega \sigma(\mathbf{v}):e(\mathbf{v})\dx\ge \alpha \int_\Omega e(\mathbf{v}):e(\mathbf{v})\dx\ge \alpha c^{-1} \|\mathbf{v}\|_{W^{1,2}(\Omega)^N}^2,\quad \mathbf{v}\in V(\Omega,\Gamma_{\mathrm{Dir}})^N.\]
	Also the continuity of the bilinear form is seen easily. The continuity of the linear functional $\theta\mapsto \int_{\Omega} \mathbf{f}\cdot \mathbf{\theta} \dx+ \int_{\Gamma_{\mathrm{Neu}}} \mathbf{g}\cdot \operatorname{Tr} \mathbf{\theta}\:d \mu_{\partial\Omega}$ on $V(\Omega,\Gamma_{\mathrm{Dir}})^N$
	follows using Theorem \ref{ThTracecheap}, and by the same theorem we obtain  estimate~\eqref{EqEstimAPriori}.
\end{proof}

\begin{remark}\label{R:constantsindep}
If $\Omega$ in Theorem \ref{ThWelPos} satisfies the hypotheses of Lemma \ref{L:equivnorms} (ii) then \eqref{EqEstimAPriori} holds with a constant $C>0$ depending only on $\varepsilon$, $N$, $D$, $\mu_{\partial D}$ and $\Gamma_{\mathrm{Dir}}$.
\end{remark}

\section{Lipschitz optimal shapes for homogeneous Neumann conditions}\label{SecShapeOp}

We proceed to the existence of optimal shapes for the elasticity system \eqref{Eq}. In this section we prove it for a practical example within a well-known setup involving Lipschitz domains and explain possible generalizations and the difficulties involved. This may be seen as motivation for our new existence results in Sections \ref{S:Lip} and \ref{S:uni}.

For $A$ as in \eqref{Eq}, and $\Omega$, $\mu_{\partial\Omega}$, $\Gamma_{\mathrm{Dir}}$, $\Gamma_{\mathrm{Neu}}$ as in Theorem \ref{ThWelPos} we can  define the functional 
\begin{equation}\label{E:functional}
J(\Omega, \mu_{\partial\Omega}, \mathbf{v}):=c_1\int_\Omega|\mathbf{v}|^2\dx+c_2\int_\Omega Ae(\mathbf{v}):e(\mathbf{v})\dx, \quad \mathbf{v}\in V(\Omega,\Gamma_{\mathrm{Dir}})^N,
\end{equation}
where $c_1$ and $c_2$ are fixed nonnegative constants. Suppose that $\mathbf{f}$ and $\mathbf{g}$ are fixed data, that we can find a class of domains within which $\Omega$ can vary but $\Gamma_{\mathrm{Dir}}$ is kept fixed and that for each $\Omega$ from that class Theorem \ref{ThWelPos} yields a unique 
weak solution $\u(\Omega,\mu_{\partial\Omega})$ to \eqref{Eq} with the same data  $\mathbf{f}$ and $\mathbf{g}$. Then we may interpret
\begin{equation}\label{EqJ}
\Omega\mapsto J(\Omega,\mu_{\partial\Omega},\u(\Omega,\mu_{\partial\Omega}))
\end{equation}
as a functional on this class of domains, and we may attempt to minimize it.

\begin{remark}\mbox{}
\begin{enumerate}
\item[(i)] For $c_1=0$ and $c_2=1$ the value $J(\Omega,\mu_{\partial\Omega},\u(\Omega,\mu_{\partial\Omega}))$ is the elastic energy stored in $\Omega$ (the compliance of $\Omega$), \cite[Subsection 3.1]{DAPOGNY-2017}. For $c_1=1$ and $c_2=0$ the value $J(\Omega,\mu_{\partial\Omega},\u(\Omega, \mu_{\partial\Omega}))$ is the square of the $L^2$-norm of the displacement field, and the minimization of this norm may be viewed as a mathematically tractable substitute for the (intractable) minimization of the $L^\infty$-norm.
\item[(ii)] For a mixed boundary value problem for the Helmholtz equation functionals similar to \eqref{EqJ}, but with an additional boundary term, are studied in \cite[Section 7]{HINZ-2020}.  
\end{enumerate}
\end{remark}

We discuss a practical application. Suppose that we are searching for the most stable (strongest) roof for a building and model it by a domain $\Omega\subset \mathbb{R}^3$. We assume that the moving part $\Gamma_{\mathrm{Neu}}$ of the boundary $\partial\Omega$ is the union of two disjoint parts $\Gamma_{\up}$ and $\Gamma_{\lo}$ which model the upper and the lower side of the roof. The set $\Gamma_{\mathrm{Dir}}$ models vertical boundary parts of the roof at which it is fixed by a structural connection to other parts of the building. See Fig.~\ref{FigRoof}. We impose the condition that the volume $c_v>0$ of the roof itself remains fixed and that $\Gamma_{\up}$ and $\Gamma_{\lo}$ are always parallel with a vertical distance 
\begin{equation}
h_z(\Omega)=\frac{c_v}{\mu(\Gamma_{\lo)}}.
\end{equation}
Since $c_v$ is kept constant, $h_z(\Omega)$ will typically change when $\Gamma_{\up}$ and $\Gamma_{\lo}$ move. We assume that $\mathbf{g}=0$ and set 
\begin{equation}\label{Eqhz}
\mathbf{f}=\rho_0 h_z(\Omega)\mathbf{e}_z, 
\end{equation}
where $\rho_0>0$ is a constant (mass) density and $\mathbf{e}_z=(0,0,1)$. The field $\mathbf{f}(\Omega)$ represents the weight of the roof $\Omega$.

\begin{figure}[!htb] 
\begin{center}
\psfrag{O}{$\Omega$}
\psfrag{G}{$G$}
\psfrag{D}{$D$}
\psfrag{Gu}{$\Gamma_{\up}$}
\psfrag{Gl}{$\Gamma_{\lo}$}
\psfrag{hz}{$h_z$}
\psfrag{Gd}{$\Gamma_{\mathrm{Dir}}$}
\includegraphics[width=8cm]{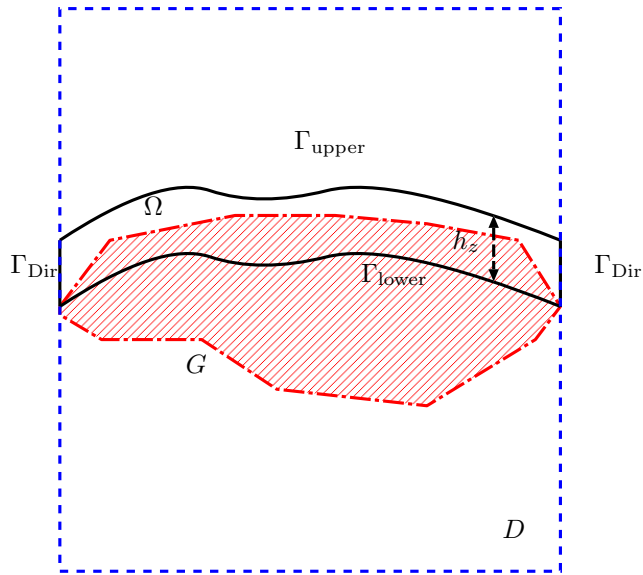}
\end{center}
\caption{\label{FigRoof} The boundary parts $\Gamma_{\up}$ and $\Gamma_{\lo}$ remain parallel to each other with distance $h_z$ as defined in~\eqref{Eqhz}. The lower part $\Gamma_{\lo}$ is supposed to stay within the closure of a fixed open set $G$. We assume that all possible shapes $\Omega$ (and therefore also the set $G$) are subsets of a fixed open set $D$.} 
	\end{figure}

To introduce suitable classes of domains let us recall the following from ~\cite{AGMON-1965,CHENAIS-1975}. Given $\varepsilon>0$, a domain $\Omega\subset\mathbb{R}^N$ is said to have the \emph{$\eps$-cone property} if 
for all $x\in \partial\Omega$ there exists $\xi_x\in \R^{N}$  with  $\|\xi_x\|=1$  such that for all $y \in \overline{\Omega}\cap B(x,\eps)$
$$  C(y,\xi_x,\eps)=\{z\in \R^{N}| (z-y,\xi_x)\ge \cos(\eps)\|z-y\| \hbox{ and } 0<\|z-y\|<\eps\}\subset \Omega.$$
It is well-known that a domain $\Omega$ with bounded boundary $\partial\Omega$ has the $\eps$-cone property for some $\varepsilon>0$ if and only if it is a Lipschitz domain, \cite[Theorem 2.4.7]{HENROT-2005}. 
Now let $D\subset \mathbb{R}^3$ be a bounded Lipschitz domain and $\varepsilon >0$. Somewhat similarly to \cite[Section 2.4]{HENROT-2005} we write 
\begin{equation}\label{E:Oeps}
\mathcal{O}(D,\varepsilon):=\{\Omega \subset D\ |\ \text{$\Omega$ is a domain satisfying the $\varepsilon$-cone property}\}.
\end{equation}
Let $G$ be a nonempty open proper subset of $D$, $\Gamma_{\mathrm{Dir}}$ a compact subset of $\partial D$ with $\mathcal{H}^2(\Gamma_{\mathrm{Dir}})>0$, let $\varepsilon>0$, $c_v>0$ and $0<\ell_0<\ell_1$. We define a class of admissible shapes by   
\begin{multline}\label{EqUadClass}
	U_{\ad}(D,G,\varepsilon,c_v,\ell_0,\ell_1, \Gamma_{\mathrm{Dir}})
	=\{\Omega\in \mathcal{O}(D,\varepsilon)\ |\ \lambda^3(\Omega)=c_\nu, \ \ell_0\le \mathcal{H}^{2}(\partial\Omega)\le \ell_1,\\	
	\partial\Omega=\Gamma_{\mathrm{Dir}}\cup \Gamma_{\mathrm{Neu}},\ \mathcal{H}^2(\Gamma_{\mathrm{Dir}}\cap \Gamma_{\mathrm{Neu}})=0,\ \Gamma_{\mathrm{Dir}}= \partial \Omega\cap \partial D, \\	
	\Gamma_{\mathrm{Neu}}=\Gamma_{\lo}\cup \Gamma_{\up},\ \Gamma_{\lo}\subset \overline{G}, \ \Gamma_{\up}=\Gamma_{\lo}+h_z\mathbf{e}_z\}.
\end{multline}

In this case the existence of an optimal shape realizing the minimum of the functional on $U_{\ad}(D,G,\varepsilon,c_v,\ell_0,\ell_1, \Gamma_{\mathrm{Dir}})$ follows from the results of  Chenais~\cite{CHENAIS-1975}. By  \cite[Theorem 2.4.10]{HENROT-2005} the collection $\mathcal{O}(D,\varepsilon)$ of domains having the $\varepsilon$-cone property and contained in $D$ is compact with respect to the convergence in the sense of characteristic functions, the convergence in the Hausdorff sense and the convergence in the sense of compacts. For any sequence $\Omega_n\to \Omega$ of sets in this class coverging in all three senses, their boundaries $\del \Omega_n \to \del \Omega$ and also their closures $\overline{\Omega}_n\to \overline{\Omega}$ converge in the Hausdorff sense. The set of domains $U_{ad}(D,G,\varepsilon,c_v,\ell_0,\ell_1, \Gamma_{\mathrm{Dir}})$ defined in~\eqref{EqUadClass} is a closed subset of the class $\mathcal{O}(D,\varepsilon)$ and therefore compact. Note also that each $\Omega\in U_{\ad}(D,G,\varepsilon,c_v,\ell_0,\ell_1, \Gamma_{\mathrm{Dir}})$, together with 
$\mu_{\partial\Omega}=\mathcal{H}^{2}|_{\partial\Omega}$, satisfies the hypotheses of Theorem \ref{ThWelPos}. Proceeding as in~\cite[Theorem~4.3.1]{HENROT-2005} we obtain the following result for the case of zero Neumann data.

\begin{theorem}\label{Throof}
Let $U_{ad}(D,G,\varepsilon,c_v,\ell_0,\ell_1, \Gamma_{\mathrm{Dir}})$ be as in \eqref{EqUadClass}, $\rho_0>0$, $c_1\geq 0$ and $c_2\geq 0$. For each $\Omega\in  U_{ad}(D,G,\varepsilon,c_v,\ell_0,\ell_1, \Gamma_{\mathrm{Dir}})$ let $\u(\Omega, \mathcal{H}^2)$ denote the unique weak solution to 
\eqref{Eq} on $\Omega$ with $\mathbf{g}\equiv 0$ and $\mathbf{f}\equiv \mathbf{f}(\Omega)$ defined as in \eqref{Eqhz}. 

Then 
there is an optimal shape $\Omega_{\opt}\in U_{ad}(D,G,\varepsilon,c_v,\ell_0,\ell_1, \Gamma_{\mathrm{Dir}})$ realizing the minimum of~\eqref{EqJ} over $U_{ad}(D,G,\varepsilon,c_v,\ell_0,\ell_1, \Gamma_{\mathrm{Dir}})$.
\end{theorem}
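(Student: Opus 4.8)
The plan is to apply the direct method of the calculus of variations, following the pattern of \cite[Theorem 4.3.1]{HENROT-2005}. Abbreviate $J(\Omega):=J(\Omega,\mathcal{H}^2,\u(\Omega,\mathcal{H}^2))$ and set $m:=\inf J$ over $U_{\ad}(D,G,\varepsilon,c_v,\ell_0,\ell_1,\Gamma_{\mathrm{Dir}})$. Since $A\in L^\infty(\Omega,\mathcal{M}_N^s(\alpha,\beta))$ and $c_1,c_2\ge 0$, the functional is nonnegative, so $m\in[0,\infty)$ and there is a minimizing sequence $(\Omega_n)_n$ with $J(\Omega_n)\to m$. By the compactness of $U_{\ad}$ recalled above (a closed subset of the compact class $\mathcal{O}(D,\varepsilon)$) I pass to a subsequence, not relabelled, with $\Omega_n\to\Omega_\ast\in U_{\ad}$ in all three senses, so that in particular $\partial\Omega_n\to\partial\Omega_\ast$, $\overline\Omega_n\to\overline\Omega_\ast$ in the Hausdorff sense and $\mathbf{1}_{\Omega_n}\to\mathbf{1}_{\Omega_\ast}$ in $L^1(D)$. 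The goal is then to show $J(\Omega_\ast)=m$.

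The core of the argument is the continuity of the solution map along this sequence. First I record uniform a priori bounds: all $\Omega_n$ share the same $\varepsilon$ and lie in $D$, so Remark \ref{R:constantsindep} furnishes \eqref{EqEstimAPriori} with a single constant $C$, and since $\mathbf{g}\equiv 0$ one has $\|\mathbf{f}(\Omega_n)\|_{L^2(\Omega_n)^N}^2=\rho_0^2 h_z(\Omega_n)^2 c_v$ with $h_z(\Omega_n)\le\diam(D)$ (because $\Omega_n\subset D$ and $\Gamma_{\up,n}=\Gamma_{\lo,n}+h_z(\Omega_n)\mathbf{e}_z$), whence $\sup_n\|\u_n\|_{W^{1,2}(\Omega_n)^N}<\infty$, where $\u_n:=\u(\Omega_n,\mathcal{H}^2)$. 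Extending by a Jones operator $\ext_{\Omega_n}$ with constant depending only on $\varepsilon$ and $N$, see \eqref{E:extensions}, yields a bounded sequence in $W^{1,2}(D)^N$; after a further subsequence $\ext_{\Omega_n}\u_n\rightharpoonup\u_\ast$ weakly in $W^{1,2}(D)^N$ and, by the compact embedding $W^{1,2}(D)\hookrightarrow L^2(D)$, strongly in $L^2(D)^N$.

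The main obstacle is to identify $\u_\ast$ as the weak solution on $\Omega_\ast$. Two checks are needed. Since $\Gamma_{\mathrm{Dir}}=\partial\Omega_n\cap\partial D$ is fixed and contained in $\partial D$, the vanishing traces of the $\u_n$ there pass to the limit and give $\u_\ast\in V(\Omega_\ast,\Gamma_{\mathrm{Dir}})^N$. For the identity \eqref{EqVF} I first test with a fixed $\theta\in C_c^\infty(\Omega_\ast)^N$: by convergence in the sense of compacts $\supp\theta\subset\Omega_n$ for all large $n$, so $\int_{\Omega_n}\sigma(\u_n):e(\theta)\,\dx=\int_{\Omega_\ast}Ae(\ext_{\Omega_n}\u_n):e(\theta)\,\dx\to\int_{\Omega_\ast}Ae(\u_\ast):e(\theta)\,\dx$ by the weak convergence of $e(\ext_{\Omega_n}\u_n)$, while the right-hand datum $\rho_0 h_z(\Omega_n)\int_{\Omega_\ast}\theta_z\,\dx\to\rho_0 h_z(\Omega_\ast)\int_{\Omega_\ast}\theta_z\,\dx$ provided $h_z(\Omega_n)\to h_z(\Omega_\ast)$. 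A density argument extends the resulting identity to all $\theta\in V(\Omega_\ast,\Gamma_{\mathrm{Dir}})^N$, and the uniqueness part of Theorem \ref{ThWelPos} identifies $\u_\ast=\u(\Omega_\ast,\mathcal{H}^2)$, so the whole sequence converges. I expect the genuinely delicate point to be this continuity of the datum $h_z(\Omega_n)=c_v/\mathcal{H}^2(\Gamma_{\lo,n})$, which demands continuity of the surface measure of the moving part $\Gamma_{\lo}$ under Hausdorff convergence; here the graph relation $\Gamma_{\up}=\Gamma_{\lo}+h_z\mathbf{e}_z$ and the uniform cone property have to be exploited, since perimeter is in general only lower semicontinuous.

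Finally, continuity of $J$ follows from energy convergence. Testing \eqref{EqVF} for $\u_n$ with $\theta=\u_n$ gives $\int_{\Omega_n}\sigma(\u_n):e(\u_n)\,\dx=\rho_0 h_z(\Omega_n)\int_D\mathbf{1}_{\Omega_n}(\ext_{\Omega_n}\u_n)_z\,\dx$, whose right-hand side tends to $\rho_0 h_z(\Omega_\ast)\int_{\Omega_\ast}(\u_\ast)_z\,\dx=\int_{\Omega_\ast}\sigma(\u_\ast):e(\u_\ast)\,\dx$ by the convergences above; hence the $c_2$-term of $J$ converges, while the $c_1$-term $c_1\int_D\mathbf{1}_{\Omega_n}|\ext_{\Omega_n}\u_n|^2\,\dx\to c_1\int_{\Omega_\ast}|\u_\ast|^2\,\dx$ by strong $L^2$ convergence together with $\mathbf{1}_{\Omega_n}\to\mathbf{1}_{\Omega_\ast}$ in $L^1(D)$. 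Therefore $J(\Omega_n)\to J(\Omega_\ast)$, so $J(\Omega_\ast)=m$ and $\Omega_\ast=\Omega_{\opt}$ is the sought minimizer.
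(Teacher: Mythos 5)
Your overall strategy---direct method, Chenais compactness of the class, uniform a priori bounds via Jones extension, identification of the weak limit, and energy convergence by testing with the solution itself---is exactly the route the paper takes (it invokes the argument of \cite[Theorem 4.3.1]{HENROT-2005} together with the compactness of $\mathcal{O}(D,\eps)$). However, there is a genuine error in your identification step. You pass to the limit in \eqref{EqVF} only for $\theta\in C_c^\infty(\Omega_\ast)^N$ and then claim that ``a density argument'' extends the identity to all of $V(\Omega_\ast,\Gamma_{\mathrm{Dir}})^N$. It does not: the closure of $C_c^\infty(\Omega_\ast)^N$ in $W^{1,2}(\Omega_\ast)^N$ consists of fields with vanishing trace on all of $\partial\Omega_\ast$ and is a proper closed subspace of $V(\Omega_\ast,\Gamma_{\mathrm{Dir}})^N$. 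The identity for compactly supported test fields only encodes $-\div\,\sigma(\u_\ast)=\mathbf{f}$ in the distributional sense and carries no information about the natural condition $\sigma(\u_\ast)\cdot n=0$ on $\Gamma_{\mathrm{Neu},\ast}$; any field in $V(\Omega_\ast,\Gamma_{\mathrm{Dir}})^N$ solving the interior equation with a different conormal derivative on $\Gamma_{\mathrm{Neu},\ast}$ satisfies the same restricted identity, so the uniqueness part of Theorem \ref{ThWelPos} cannot be invoked. The correct device (the one used in the proof of Theorem \ref{ThPrincipale}) is to test with restrictions of a fixed $\phi\in V(D,\Gamma_{\mathrm{Dir}})^N$, writing $\int_{\Omega_n}\sigma(\u_n):e(\phi)\,\dx=\int_D \mathbf{1}_{\Omega_n}Ae(\ext_{\Omega_n}\u_n):e(\phi)\,\dx$ and pairing the strong $L^2(D)$ convergence of $\mathbf{1}_{\Omega_n}Ae(\phi)$ against the weak convergence of $e(\ext_{\Omega_n}\u_n)$; since every $\theta\in V(\Omega_\ast,\Gamma_{\mathrm{Dir}})^N$ extends to an element of $V(D,\Gamma_{\mathrm{Dir}})^N$ (Jones extension plus the trace argument of Appendix \ref{S:technical}), this yields the full variational identity on $\Omega_\ast$.

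The second issue is one you correctly isolate but do not close: the convergence $h_z(\Omega_n)=c_v/\mathcal{H}^2(\Gamma_{\lo,n})\to h_z(\Omega_\ast)$. This is genuinely needed, since otherwise the limit field solves the problem with datum $\rho_0 h_\ast\mathbf{e}_z$ for some subsequential limit $h_\ast$ that need not equal $h_z(\Omega_\ast)$, and both the identification $\u_\ast=\u(\Omega_\ast,\mathcal{H}^2)$ and the chain $J(\Omega_n)\to J(\Omega_\ast)$ break down; as you note, Hausdorff convergence only gives semicontinuity of surface area in general. An actual argument has to be supplied here, exploiting the special structure of the class \eqref{EqUadClass} (the uniform $\eps$-cone property, the confinement $\Gamma_{\lo}\subset\overline{G}$, the parallel-slab relation $\Gamma_{\up}=\Gamma_{\lo}+h_z\mathbf{e}_z$ and the volume constraint $\lambda^3(\Omega)=c_v$ together with convergence of characteristic functions). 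The paper glosses over this point as well by referring to \cite[Theorem 4.3.1]{HENROT-2005}, where the right-hand side does not depend on $\Omega$, so your explicit flagging of it is a useful observation---but as it stands your proposal proves the theorem only under the additional assumption that $\Omega\mapsto\mathcal{H}^2(\Gamma_{\lo})$ is continuous along the minimizing sequence.
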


\begin{remark}\mbox{}
\begin{enumerate}
\item[(i)] A similar example of a shape optimization problem is studied in~\cite{DAPOGNY-2017}. There the authors propose to minimize \eqref{EqJ} with $c_1=0$ and $c_2=1$ in \eqref{E:functional} in a setup where $\Gamma_{\mathrm{Dir}}$ and a part $\Gamma_{\mathrm{Neu},\mathrm{fix}}$ of $\Gamma_{\mathrm{Neu}}$ are fixed and only $\Gamma:=\Gamma_{\mathrm{Neu}}\setminus \Gamma_{\mathrm{Neu},\mathrm{fix}}$, on which $\mathbf{g}$ is assumed to be zero, can move. On the fixed part $\Gamma_{\mathrm{Neu},\mathrm{fix}}$ the function $\mathbf{g}$ can be nonzero. They keep the volume of $\Omega$ constant and introduce a penalization term in the functional which restricts the area of the possible changes of $\Gamma$. This term is based on the (signed) distance to an initially given shape $\Gamma_0$. 
\item[(ii)] The idea to restrict the possible shapes to a neigborhood of a given shape is equivalent to fixing a compact set $\overline{G}$ within  which the moving boundary part is required to stay, $\Gamma\subset \overline{G}$. In ~\cite{MAGOULES-2020} the latter condition is shown to be necessary to ensure the existence on an optimal shape. A comparable condition is also used in ~\cite{HINZ-2020}, where the domains $\Omega$ are required to contain another fixed open set $D_0$.
 \end{enumerate}
\end{remark}

The sketched roof optimization problem can easily be generalized in the sense that \eqref{EqJ} could be minimized over
\begin{multline}\label{EqUadClassDap}
	U_{\ad}(D,G,\varepsilon,c_v,\Gamma_{\mathrm{Dir}}, \Gamma_{\mathrm{Neu},\mathrm{fix}})=\{\Omega\in \mathcal{O}(D,\varepsilon)\ | \ \lambda^N(\Omega)=c_v, \\
	\partial\Omega=\Gamma_{\mathrm{Dir}}\cup \Gamma_{\mathrm{Neu}},\ \mathcal{H}^{N-1}(\Gamma_{\mathrm{Dir}}\cap \Gamma_{\mathrm{Neu}})=0,\ \Gamma_{\mathrm{Dir}}= \partial \Omega\cap \partial D, \\
	\Gamma_{\mathrm{Neu},\mathrm{fix}}\subset \Gamma_{\mathrm{Neu}},\ \Gamma_{\mathrm{Neu}}\setminus \Gamma_{\mathrm{Neu},\mathrm{fix}}\subset \overline{G}\},
\end{multline}
where $D$, $G$, $\varepsilon$, $c_v$, $\Gamma_{\mathrm{Dir}}$ are similarly as before. In this setup $\Gamma_{\mathrm{Neu},\mathrm{fix}} \subset \Gamma_{\mathrm{Neu}}$ is kept fixed and only $\Gamma=\Gamma_{\mathrm{Neu}}\setminus \Gamma_{\mathrm{Neu},\mathrm{fix}}$ can move, as in~\cite{DAPOGNY-2017}. If $\mathbf{g}$ can be nonzero only on the fixed part $\Gamma_{\mathrm{Neu},\mathrm{fix}}$ (but not on the moving part $\Gamma$) then one can still prove the existence of an optimal shape by the method of \cite[Theorem 4.3.1]{HENROT-2005}.

As mentioned in the introduction, it may be desirable to allow nonzero Neumann data $\mathbf{g}$ also on the moving part $\Gamma$ of the boundary (for instance, if we wish to include an external force on the roof surface caused by the weight of snow).
In the case of nonzero Neumann data we need to follow our recent studies~\cite{MAGOULES-2020,HINZ-2020} to obtain results on compactness and on the existence of optimal shapes. The main additional difficulty consists in the fact that nonzero Neumann data on the moving part $\Gamma$ requires a correct handling of the 'area measure' $\mu_{\partial\Omega}|_\Gamma$ on this part, because the variational formulation \eqref{EqVF} involves integrals with respect to it. A sequence of domains from \eqref{EqUadClassDap} converges to a domain in that class in the three senses discussed above. However, the natural type of convergence of measures on the boundaries is their weak$^\ast$ convergence, and the weak$^\ast$ limit of Hausdorff measures is not necessarily a Hausdorff measure, ~\cite[Section 3]{MAGOULES-2020}. 

In the next two sections we present two results regarding this issue. The first, Theorem \ref{ThPrincipale}, states that 
for classes of admissible shapes built on $\mathcal{O}(D,\varepsilon)$ we can find optimal shapes that realize the infimum of the functional \eqref{EqJ}. The second, Theorem \ref{ThOptimalEpsDelta}, considers wider parametrized classes of admissible shapes, namely certain uniform domains whose boundaries carry measures satisfying \eqref{E:upperreg}. It guarantees the existence of optimal shapes which then indeed realize the minima of \eqref{EqJ} over such classes. This may be viewed as a relaxation of the Lipschitz boundary case.


\section{Lipschitz optimal shapes realizing the infimum of the energy}\label{S:Lip}

We follow the method in~\cite{MAGOULES-2020}. Given a bounded Lipschitz domain $D\subset \mathbb{R}^N$ and $\varepsilon>0$  we write, similarly as before, $\mathcal{O}(D,\varepsilon)$ for the collection of all domains $\Omega\subset D$ that satisfy the $\varepsilon$-cone property. Now suppose that $G$ is a nonempty open proper subset of $D$, $\Gamma_{\mathrm{Dir}}$ a compact subset of $\partial D$ with $\mathcal{H}^{N-1}(\Gamma_{\mathrm{Dir}})>0$ and $\varepsilon>0$, $c_v>0$, $\hat{c}>0$ are given constants. We define the class 
\begin{multline}\label{E:theclass}
	U_{\ad}(D,G,\varepsilon,c_v, \hat{c}, \Gamma_{\mathrm{Dir}})=\{\Omega\in \mathcal{O}(D,\varepsilon)\ | \ \lambda^N(\Omega)=c_v, \\
	\partial\Omega=\Gamma_{\mathrm{Dir}}\cup \Gamma_{\mathrm{Neu}},\ \mathcal{H}^{N-1}(\Gamma_{\mathrm{Dir}}\cap \Gamma_{\mathrm{Neu}})=0,\ \Gamma_{\mathrm{Dir}}= \partial \Omega\cap \partial D, \Gamma_{\mathrm{Neu}}\subset \overline{G}\\
	\text{ and $\mathcal{H}^{N-1}(\Gamma_{\mathrm{Neu}} \cap B(x,r)) \le \hat{c}r^{N-1}$ for any $x\in \Gamma_{\mathrm{Neu}}$ and $r>0$}\}.
\end{multline}

\begin{remark}
Since $D$ is bounded there is some $\ell_1>0$, depending on $\hat{c}>0$, such that for all $\Omega \in U_{\ad}(D,G,\varepsilon,c_v, \hat{c}, \Gamma_{\mathrm{Dir}})$ we have $\mathcal{H}^{N-1}(\Gamma_{\mathrm{Neu}})\le \ell_1$, and a smaller $\hat{c}$ leads to a smaller bound $\ell_1$. Clearly there is also some uniform lower bound $\ell_0$ for $\mathcal{H}^{N-1}(\Gamma_{\mathrm{Neu}})$, and consequently $\hat{c}$ must be chosen large enough to ensure the class $\Omega \in U_{\ad}(D,G,\varepsilon,c_v, \hat{c}, \Gamma_{\mathrm{Dir}})$ contains more than one domain.
\end{remark}

Now suppose that $\mathbf{f}\in L_2(D)^N$ and $\mathbf{g}\in  [\mathrm{Tr}(W^{1,2}(D))]^N$ are given. For any domain $\Omega$ and Borel measure $\mu_{\partial\Omega}$ on $\partial\Omega$ satisfying the hypotheses of Theorem \ref{ThWelPos}, let $\u(\Omega, \mu_{\partial\Omega})$ denote the unique weak solution for \eqref{Eq} on $\Omega$ with $\mu_{\partial\Omega}$ on $\partial\Omega$. Note that in particular each 
$\Omega \in U_{\ad}(D,G,\varepsilon,c_v, \hat{c}, \Gamma_{\mathrm{Dir}})$ satisfies these hypotheses with $\mu_{\partial\Omega}=\mathcal{H}^{N-1}|_{\partial\Omega}$, and similarly as before we write $\u(\Omega,\mathcal{H}^{N-1})$ for the corresponding weak solution.

We consider the functional~\eqref{EqJ} with $c_1\geq 0$ and $c_2\geq 0$ in \eqref{E:functional}. Ideally we would like to minimize $J(\Omega,\mathcal{H}^{N-1},\u(\Omega,\mathcal{H}^{N-1}))$ on $U_{\ad}(D,G,\varepsilon,c_v, \hat{c}, \Gamma_{\mathrm{Dir}})$. However, under the present hypotheses one can only prove the following.

%
%
 
  \begin{theorem}\label{ThPrincipale}
  Let $U_{\ad}(D,G,\varepsilon,c_v, \hat{c}, \Gamma_{\mathrm{Dir}})$ be as in \eqref{E:theclass}, $c_1\geq 0$, $c_2\geq 0$, $\mathbf{f}\in L_2(D)^N$ and $\mathbf{g}\in  [\mathrm{Tr}(W^{1,2}(D))]^N$. 

Then there are a domain $\Omega_{\opt}\in U_{\ad}(D,G,\varepsilon,c_v, \hat{c}, \Gamma_{\mathrm{Dir}})$ and a finite Borel measure $\mu_{\partial\Omega_{\opt}}$ on $\partial\Omega_{\opt}$, equivalent to $\mathcal{H}^{N-1}|\partial\Omega_{\opt}$, such that $\mathcal{H}^{N-1}(\Gamma_{\mathrm{Neu},\opt})\leq \mu_{\partial\Omega_{\opt}}(\Gamma_{\mathrm{Neu},\opt})$
 and
\begin{multline*}
J(\Omega_{\opt}, \mu_{\partial\Omega_{\opt}}, \u(\Omega_{\opt},\mu_{\partial\Omega_{\opt}}))\\
=\inf_{\Omega\in U_{\ad}(D,G,\varepsilon,c_v, \hat{c}, \Gamma_{\mathrm{Dir}})}J(\Omega, \mathcal{H}^{N-1}, \u(\Omega,\mathcal{H}^{N-1})).
\end{multline*}

If $\mu_{\partial\Omega_{\opt}}=\mathcal{H}^{N-1}|_{\partial\Omega_{\opt}}$, then the infimum is actually a minimum.

\end{theorem}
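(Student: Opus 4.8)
The plan is to run the direct method of the calculus of variations, combining the compactness of $\mathcal{O}(D,\varepsilon)$ for domains with a weak-$\ast$ compactness argument for the surface measures, and to close the argument by a variational (Mosco-type) convergence of the associated states. First I would choose a minimizing sequence $(\Omega_n)_n\subset U_{\ad}(D,G,\varepsilon,c_v,\hat c,\Gamma_{\mathrm{Dir}})$ for the functional $\Omega\mapsto J(\Omega,\mathcal{H}^{N-1},\u(\Omega,\mathcal{H}^{N-1}))$. By \cite[Theorem 2.4.10]{HENROT-2005} the class $\mathcal{O}(D,\varepsilon)$ is compact for the convergence of characteristic functions, the Hausdorff convergence, and the convergence in the sense of compacts, so after passing to a subsequence I obtain a limit domain $\Omega_{\opt}\in\mathcal{O}(D,\varepsilon)$ with $\partial\Omega_n\to\partial\Omega_{\opt}$ and $\overline{\Omega_n}\to\overline{\Omega_{\opt}}$ in the Hausdorff sense. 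The next step is to verify $\Omega_{\opt}\in U_{\ad}(D,G,\varepsilon,c_v,\hat c,\Gamma_{\mathrm{Dir}})$: the volume constraint $\lambda^N(\Omega_{\opt})=c_v$ follows from the $L^1$-convergence of characteristic functions, the identities $\Gamma_{\mathrm{Dir}}=\partial\Omega_{\opt}\cap\partial D$ and $\Gamma_{\mathrm{Neu},\opt}\subset\overline G$ from the Hausdorff convergence of boundaries together with the fact that $\Gamma_{\mathrm{Dir}}$ and $\overline G$ are fixed and closed, and the upper $(N-1)$-regularity of $\mathcal{H}^{N-1}$ on $\Gamma_{\mathrm{Neu},\opt}$ by passing to the limit in the uniform bound $\mathcal{H}^{N-1}(\Gamma_{\mathrm{Neu},n}\cap B(x,r))\le\hat c\,r^{N-1}$.

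The heart of the proof concerns the boundary measures. Setting $\mu_n:=\mathcal{H}^{N-1}|_{\partial\Omega_n}$, the constant $\hat c$ and the boundedness of $D$ yield a uniform bound on the total masses $\mu_n(\partial\Omega_n)$, so by weak-$\ast$ compactness of finite Borel measures on the compact set $\overline D$ I may assume, along a further subsequence, that $\mu_n\rightharpoonup\mu_{\partial\Omega_{\opt}}$ weakly-$\ast$ for some finite Borel measure. The Hausdorff convergence of boundaries gives $\supp\mu_{\partial\Omega_{\opt}}=\partial\Omega_{\opt}$, and passing to the limit in $\mu_n(B(x,r))\le\hat c\,r^{N-1}$ shows that $\mu_{\partial\Omega_{\opt}}$ again satisfies \eqref{E:upperreg} with $d=N-1$; in particular $\mu_{\partial\Omega_{\opt}}\ll\mathcal{H}^{N-1}|_{\partial\Omega_{\opt}}$. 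The reverse absolute continuity, hence the asserted equivalence, I would obtain as in \cite{MAGOULES-2020} from the $\varepsilon$-cone property, which supplies a uniform lower regularity of the surface measures surviving the weak-$\ast$ limit. Since $\Gamma_{\mathrm{Dir}}$ is fixed, $\mu_{\partial\Omega_{\opt}}$ coincides with $\mathcal{H}^{N-1}$ there; combining conservation of total mass (testing against the constant $1\in C(\overline D)$ shows no mass escapes, so $\mu_{\partial\Omega_{\opt}}(\partial\Omega_{\opt})=\lim_n\mathcal{H}^{N-1}(\partial\Omega_n)$) with the lower semicontinuity $\mathcal{H}^{N-1}(\partial\Omega_{\opt})\le\liminf_n\mathcal{H}^{N-1}(\partial\Omega_n)$ of the perimeter under $L^1$-convergence, and subtracting the common Dirichlet part, yields the mass inequality $\mathcal{H}^{N-1}(\Gamma_{\mathrm{Neu},\opt})\le\mu_{\partial\Omega_{\opt}}(\Gamma_{\mathrm{Neu},\opt})$.

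It remains to pass to the limit in the state equation and in the functional. I would use the Jones extension operators to map each $\u_n:=\u(\Omega_n,\mathcal{H}^{N-1})$ to a field on $D$; the a priori estimate \eqref{EqEstimAPriori}, uniform by Remark \ref{R:constantsindep} and Lemma \ref{L:equivnorms}(ii), bounds these extensions in $W^{1,2}(D)^N$, so a subsequence converges weakly in $W^{1,2}(D)^N$ and strongly in $L^2(D)^N$. Identifying the limit as the weak solution $\u(\Omega_{\opt},\mu_{\partial\Omega_{\opt}})$ of \eqref{EqVF} on $\Omega_{\opt}$ requires passing to the limit in the bilinear form and in the boundary term $\int_{\Gamma_{\mathrm{Neu},n}}\mathbf{g}\cdot\operatorname{Tr}\theta\,d\mu_n$, for which I would couple the weak-$\ast$ convergence $\mu_n\rightharpoonup\mu_{\partial\Omega_{\opt}}$ with the continuity and convergence of the trace operators of Theorem \ref{ThTracecheap}; this is precisely the Mosco convergence of the associated energy functionals recorded in Appendix \ref{SecMosco}. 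Mosco convergence furnishes both strong convergence of the states and convergence of the minimal energies, whence $J(\Omega_n,\mathcal{H}^{N-1},\u_n)\to J(\Omega_{\opt},\mu_{\partial\Omega_{\opt}},\u(\Omega_{\opt},\mu_{\partial\Omega_{\opt}}))$, and since $(\Omega_n)_n$ was minimizing this common limit equals the infimum. Finally, if $\mu_{\partial\Omega_{\opt}}=\mathcal{H}^{N-1}|_{\partial\Omega_{\opt}}$ then $\Omega_{\opt}$ is itself admissible with its natural surface measure and realizes the infimum, so the infimum is in fact a minimum.

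I expect the \emph{main obstacle} to be the treatment of the boundary measures and the accompanying variational convergence. Weak-$\ast$ limits of codimension-one Hausdorff measures need not be Hausdorff measures, which is exactly why the limiting energy must be evaluated against $\mu_{\partial\Omega_{\opt}}$ rather than $\mathcal{H}^{N-1}|_{\partial\Omega_{\opt}}$, and why only an infimum---not a priori a minimum---is secured. Establishing the equivalence of $\mu_{\partial\Omega_{\opt}}$ with $\mathcal{H}^{N-1}|_{\partial\Omega_{\opt}}$ and, simultaneously, the convergence of the boundary integrals in \eqref{EqVF} alongside the weak $W^{1,2}$ convergence of the states is the delicate step; here the uniform regularity encoded in the class through $\varepsilon$ and $\hat c$, the uniform constants of Lemma \ref{L:equivnorms}(ii) and Remark \ref{R:constantsindep}, and the Mosco convergence of Appendix \ref{SecMosco} are all indispensable.
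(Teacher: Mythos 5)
Your overall strategy coincides with the paper's: a minimizing sequence, compactness of the admissible class in the three domain topologies together with weak-$\ast$ compactness of the surface measures (both imported from \cite[Lemma~3.1]{MAGOULES-2020}), uniform extension operators and the uniform a priori bound of Lemma \ref{L:equivnorms}(ii) to extract a weak $W^{1,2}(D)^N$ limit of the states, identification of that limit as the solution on the limit domain, and finally convergence of the energies. The admissibility of the limit domain, the equivalence of $\mu_{\partial\Omega_{\opt}}$ with $\mathcal{H}^{N-1}|_{\partial\Omega_{\opt}}$ and the mass inequality are delegated to \cite{MAGOULES-2020} in the paper as well, so your sketches of these points are consistent with the intended argument.

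The one step where your proposal deviates, and where it would not go through as written, is the identification of the weak limit as $\mathbf{u}(\Omega_{\opt},\mu_{\partial\Omega_{\opt}})$: you assert that this step ``is precisely the Mosco convergence recorded in Appendix \ref{SecMosco}''. Theorem \ref{ThMoscoR} does not apply here, for two reasons. First, it concerns Robin forms whose boundary contribution is the quadratic term $\alpha\int_{\Gamma}|\operatorname{Tr}\mathbf{u}|^2\,d\mu_{\Gamma}$, whereas in \eqref{EqVF} the boundary datum enters through the linear functional $\theta\mapsto\int_{\Gamma_{\mathrm{Neu}}}\mathbf{g}\cdot\operatorname{Tr}\theta\,d\mu_{\partial\Omega}$; Mosco convergence of the Robin forms says nothing about the continuity of this linear term along the sequence. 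Second, Theorem \ref{ThMoscoR} requires the boundary measures to satisfy \eqref{E:upperreg} with an exponent $d>N-1$, which fails for $\mathcal{H}^{N-1}$ restricted to a Lipschitz boundary, where $d=N-1$ is sharp. The paper instead passes to the limit directly in the variational identity: it introduces the residual functionals
\begin{equation*}
F_k[\phi]=\int_{\Omega_k}\sigma(\mathbf{u}_k):e(\phi)\,\dx-\int_{\Omega_k}\mathbf{f}\cdot\phi\,\dx-\int_{\Gamma_{\mathrm{Neu},k}}\mathbf{g}\cdot\phi\,d\mathcal{H}^{N-1}
\end{equation*}
and shows, following \cite[Theorem~3.2]{MAGOULES-2020}, that $\lim_kF_k[\phi]=F[\phi]=0$ for every $\phi\in V(D,\Gamma_{\mathrm{Dir}})^N$, after which uniqueness in Theorem \ref{ThWelPos} identifies the limit. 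You do gesture at the correct mechanism (coupling the weak-$\ast$ convergence of the measures with the trace operators of Theorem \ref{ThTracecheap}), so the gap is reparable, but you should replace the appeal to Appendix \ref{SecMosco} by this direct limit passage in the weak formulation, and likewise obtain the norm convergence $\lim_k\bigl\|\mathds{1}_{\Omega_k}\nabla E\mathbf{u}_k\bigr\|_{L^2(D)^{N\times N}}=\bigl\|\mathds{1}_{\Omega_{\opt}}\nabla E\mathbf{u}^{\ast}\bigr\|_{L^2(D)^{N\times N}}$ from \eqref{EqVF} rather than from a Mosco argument.
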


\begin{proof} Insubstantial modifications of \cite[Lemma~3.1]{MAGOULES-2020} guarantee that for fixed parameters the class $U_{\ad}(D,G,\varepsilon,c_v, \hat{c}, \Gamma_{\mathrm{Dir}})$ is compact with respect to the convergence in the sense of characteristic functions, the Hausdorff sense and the sense of compacts. Moreover, each sequence $(\Omega_n)_n$ in $U_{\ad}(D,G,\varepsilon,c_v, \hat{c}, \Gamma_{\mathrm{Dir}})$ has a subsequence $(\Omega_{n_k})_k$ converging to a limit domain $\Omega_\ast$ in the Hausdorff sense, the sense of characteristic functions and the sense of compacts and for which the Hausdorff measures $\mu_{\partial\Omega_{n_k}}=\mathcal{H}^{N-1}|_{\partial\Omega_{n_k}}$ converge weakly$^*$ to a measure $\mu_{\partial\Omega_\ast}$ which is equivalent to $\mathcal{H}^{N-1}|_{\partial\Omega_\ast}$ and
satisfies 
\begin{equation}\label{E:inequality}
\mathcal{H}^{N-1}(\Gamma_{\mathrm{Neu},\ast}) \le \mu_{\partial\Omega_\ast}(\Gamma_{\mathrm{Neu},\ast}).
\end{equation} 
Here we write $\Gamma_{\mathrm{Neu},\ast}$ for the part of $\partial\Omega_\ast$ that corresponds to $\Gamma_{\mathrm{Neu}}$ in \eqref{E:theclass}, below we will use a similar notation with analogous meaning.

Now suppose that this sequence $(\Omega_n)_{n}\subset \hat{U}_{ad}(D, G, \eps, \hat{c}, c_v,\ell_1)$ is a minimizing sequence of the functional \eqref{EqJ}, this minimizing sequence exists since the functional is nonnegative. We relabel and denote the subsequence $(\Omega_{n_k})_k$ above by $(\Omega_{k})_k$. Let $\u_k:=\u(\Omega_k,\mathcal{H}^{N-1})$ denote the unique weak solution to \eqref{Eq} on $\Omega_k$ and let $\u_\ast:=\u(\Omega_\ast,\mu_{\partial\Omega_\ast})$ denote that on $\Omega_\ast$ with measure $\mu_{\partial\Omega_\ast}$ on the boundary.

Since all domains $\Omega_k$ have Lipschitz boundaries of finite Hausdorff measures, there are extension operators $E: W^{1,2}(\Omega_k)^N\to W^{1,2}(\R^{N})^N$ and a constant $C_E>0$ independent of $k$ such that $\|E \u_k \|_{W^{1,2}(D)^N}\le C_E\|\u_k\|_{W^{1,2}(\Omega_k)^N}$ for all $k$, see~\cite{CHENAIS-1975}. By Lemma \ref{L:equivnorms} (ii) and Theorem \ref{ThWelPos} (together with Theorem \ref{ThTracecheap}) we can find a constant $C>0$, independent of $k$, such that 
\[  \sup_k\|\u_k\|_{W^{1,2}(\Omega_k)^N} \leq C \left(\|\mathbf{f}\|_{L_2(D)^N}+\|\mathbf{g}\|_{\mathrm{Tr}(W^{1,2}(D))]^N}\right).  \]
This means that $(E \u_k |_{D})_{k}$ is bounded in $W^{1,2}(D)^N$, and consequently there exists $\u^*\in W^{1,2}(D)^N$ such that $E \u_k |_{D}\rightharpoonup \u^*$ in $W^{1,2}(D)^N$.

We define linear functionals $F_{k}$ and $F$ on $V(\Omega,\Gamma_{\mathrm{Dir}})^N$ by 
\[F_{k}[\phi]=\int_{\Omega_k} \sigma(\u_k):e(\mathbf{\phi})\dx-\int_{\Omega_k} \mathbf{f}\cdot \mathbf{\phi} -\int_{\Gamma_{\mathrm{Neu},k}} \mathbf{g}\cdot \mathbf{\phi}d \mathcal{H}^{N-1},\]
$\mathbf{\phi}\in V(\Omega,\Gamma_{\mathrm{Dir}})^N$, and 
\[	F[\phi]=\int_{\Omega} \sigma(\u^*):e(\mathbf{\phi})\dx-\int_{\Omega_\ast} \mathbf{f}\cdot \mathbf{\phi} -\int_{\Gamma_{\mathrm{Neu},\ast}} \mathbf{g}\cdot \mathbf{\phi}d \mu^*.\]
Following the proof of~\cite[Theorem~3.2]{MAGOULES-2020} one can see that $F[\phi]=\lim_k F_{k}[\phi]=0$ for all $\phi\in V(D,\Gamma_{\mathrm{Dir}})^N$, which implies that $F[\phi]=0$, $\phi\in V(D,\Gamma_{\mathrm{Dir}})^N$. By the uniqueness established in Theorem \ref{ThWelPos} we must have $\u^*\vert_{\Omega_\ast}=\u_\ast$.

Using \eqref{EqVF} it is not difficult to show that 
\[\lim_k\left\|\mathds{1}_{\Omega_k}\nabla E\u_k\right\|_{L^2(D)^{N\times N}}=\left\|\mathds{1}_{\Omega}\nabla E\u^\ast\right\|_{L^2(D)^{N\times N}}.\] %
Since by the Rellich-Kondrachov theorem also $\lim_k E\u_k=\u^\ast$ in $L^2(D)^N$, it follows that $\lim_{k}J(\Omega_k,\mathcal{H}^{N-1},\u_k)=J(\Omega_\ast,\mu_{\partial\Omega_\ast},\u_\ast)$, so that $\Omega_{\opt}:=\Omega_\ast$ and $\mu_{\Omega_{\opt}}=\mu_{\partial\Omega_\ast}$ are as desired. See~\cite{MAGOULES-2020} for more details.
\end{proof}

\section{Relaxation and optimal shapes realizing the energy minimum}\label{S:uni}

Choosing a larger class of admissible shapes one can ensure the existence of an optimal shape which actually realizes the minimum  
of~\eqref{EqJ}. Following~\cite{HINZ-2020} we relax the restriction on the domains $\Omega$ to be Lipschitz domains and the restriction on the measures on the boundaries to be Hausdorff measures. 

Let $D\subset \mathbb{R}^N$ be a bounded Lipschitz domain and $\varepsilon>0$. By $\hat{\mathcal{O}}(D,\varepsilon)$ we denote the collection of all $(\varepsilon,\infty)$-domains $\Omega\subset D$. We require the measures on the boundary to satisfy the upper regularity condition \eqref{E:upperreg} and a second, somewhat complementary scaling condition. Given $s>0$ and a Borel measure  $\mu_F$ on $\mathbb{R}^N$ with $F=\supp \mu_F$ we say that $\mu_F$ is \emph{lower $s$-regular in the closed ball sense} if there is a constant $\overline{c}_s>0$ such that 
\begin{equation}\label{E:lowerreg}
\mu_F(\overline{B(x,r)})\geq \bar{c}_s r^s,\quad x\in F,\quad 0<r\leq 1.
\end{equation}

Now let $D_0$ be a non-empty Lipschitz domain and a proper subset of $D$ and suppose that $\Gamma_{\mathrm{Dir}}\subset \partial D\cap \partial D_0$ has positive measure $\mathcal{H}^{N-1}(\Gamma_{\mathrm{Dir}})>0$. Given $\varepsilon>0$, $N-1\leq s<N$, $0\leq d\leq s$, $\bar{c}_s>0$ and $c_d>0$ let 
\begin{multline}\label{E:U}
U_{\ad}(D,D_0,\varepsilon, c_v, s,d,\bar{c}_s,c_d,\Gamma_{\mathrm{Dir}}):=\{(\Omega,\mu_{\partial\Omega})\ |\ \Omega\in \hat{\mathcal{O}}(D,\varepsilon),\  D_0\subset \Omega,\\ \lambda^N(\Omega)=c_v,\
\partial\Omega=\Gamma_{\mathrm{Dir}}\cup \Gamma_{\mathrm{Neu}},\ \Gamma_{\mathrm{Dir}}= \partial \Omega\cap \partial D,\
\mu_{\partial\Omega}=\mathcal{H}^{N-1}|_{\Gamma_{\mathrm{Dir}}} + \mu_{\Gamma_{\mathrm{Neu}}}, \\ \text{$\mu_{\Gamma_{\mathrm{Neu}}}$ with $\supp \mu_{\Gamma_{\mathrm{Neu}}}= \Gamma_{\mathrm{Neu}}$ satisfies \eqref{E:upperreg} and \eqref{E:lowerreg}},\ \mu_{\partial\Omega}(\Gamma_{\mathrm{Dir}}\cap \Gamma_{\mathrm{Neu}})=0\}
\end{multline}

Using~\cite[Theorem 3]{HINZ-2020} it can be seen that $U_{\ad}(D,D_0,\varepsilon, c_v, s,d,\bar{c}_s,c_d,\Gamma_{\mathrm{Dir}})$ is compact with respect to the convergence in the Hausdorff sense, the sense of compacts, the sense of characteristic functions and the sense of weak convergence of the measures $\mu_{\partial\Omega}$ on the boundaries. By Lemma \ref{L:equivnorms} (ii) and Theorem \ref{ThWelPos} we have \eqref{EqEstimAPriori} with a uniform constant for all $(\Omega,\mu_{\partial\Omega})\in U_{\ad}(D,D_0,\varepsilon, c_v, s,d,\bar{c}_s,c_d,\Gamma_{\mathrm{Dir}})$. This, together with uniform estimates for the norms of extension operators, allows to obtain the following result by similar arguments as in the Lipschitz case.

\begin{theorem}\label{ThOptimalEpsDelta}
 Let $U_{\ad}(D,D_0,\varepsilon, c_v, s,d,\bar{c}_s,c_d,\Gamma_{\mathrm{Dir}})$ be as in \eqref{E:U}, $c_1\geq 0$ and $c_2\geq 0$. Let $\mathbf{f}\in L_2(D)^N$ and $\mathbf{g}\in  [\mathrm{Tr}(W^{1,2}(D))]^N$. 

Then there is $(\Omega_{\opt},\mu_{\partial\Omega_{\opt}})\in U_{\ad}(D,D_0,\varepsilon, c_v, s,d,\bar{c}_s,c_d,\Gamma_{\mathrm{Dir}})$ such that 
\begin{multline*}
J(\Omega_{\opt}, \mu_{\partial\Omega_{\opt}}, \u(\Omega_{\partial\Omega_{\opt}},\mu_{\partial\Omega_{\opt}}))\\
=\min_{(\Omega,\mu_{\partial\Omega})\in U_{\ad}(D,D_0,\varepsilon, c_v, s,d,\bar{c}_s,c_d,\Gamma_{\mathrm{Dir}})}J(\Omega, \mu_{\partial\Omega}, \u(\Omega,\mu_{\partial\Omega})).
\end{multline*}
\end{theorem}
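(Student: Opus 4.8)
The plan is to mirror the argument of Theorem \ref{ThPrincipale}, the decisive new feature being that the enlarged class \eqref{E:U} is itself closed under the relevant limits, so that a minimizing sequence converges to an admissible pair and the infimum is attained as a genuine minimum. Since $J\geq 0$, a minimizing sequence $(\Omega_n,\mu_{\partial\Omega_n})_n$ in $U_{\ad}(D,D_0,\varepsilon, c_v, s,d,\bar{c}_s,c_d,\Gamma_{\mathrm{Dir}})$ exists. By the compactness quoted from \cite[Theorem 3]{HINZ-2020} I may pass to a subsequence, relabelled $(\Omega_k,\mu_{\partial\Omega_k})_k$, converging to a pair $(\Omega_\ast,\mu_{\partial\Omega_\ast})$ in the Hausdorff sense, the sense of compacts and the sense of characteristic functions, and with $\mu_{\partial\Omega_k}\rightharpoonup\mu_{\partial\Omega_\ast}$ weakly$^\ast$. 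The crucial point is that this limit pair again lies in $U_{\ad}(D,D_0,\varepsilon, c_v, s,d,\bar{c}_s,c_d,\Gamma_{\mathrm{Dir}})$, because the upper and lower regularity conditions \eqref{E:upperreg} and \eqref{E:lowerreg} are stable under weak$^\ast$ limits; this is exactly what separates the present setting from the Lipschitz case of Theorem \ref{ThPrincipale}, where the weak$^\ast$ limit of the Hausdorff measures need only be \emph{equivalent} to $\mathcal{H}^{N-1}$ and may leave the class, forcing an infimum rather than a minimum.

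I would then set $\u_k:=\u(\Omega_k,\mu_{\partial\Omega_k})$ and $\u_\ast:=\u(\Omega_\ast,\mu_{\partial\Omega_\ast})$. By Remark \ref{R:constantsindep}, that is, by Lemma \ref{L:equivnorms} (ii) together with Theorem \ref{ThWelPos}, the a priori bound \eqref{EqEstimAPriori} holds with a constant independent of $k$, whence $\sup_k\|\u_k\|_{W^{1,2}(\Omega_k)^N}<\infty$. Using the extension operators $E$ of \cite[Theorem 2]{JONES-1981} for each $\Omega_k$, whose norms are controlled by \eqref{E:extensions} in terms of $\varepsilon$ and $N$ only, the sequence $(E\u_k|_D)_k$ is bounded in $W^{1,2}(D)^N$; after a further subsequence it converges weakly to some $\u^\ast\in W^{1,2}(D)^N$ and, by Rellich--Kondrachov, strongly in $L^2(D)^N$.

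Next I would identify $\u^\ast|_{\Omega_\ast}$ with $\u_\ast$ exactly as in the proof of Theorem \ref{ThPrincipale}: forming the functionals $F_k$ and $F$ associated with the variational identity \eqref{EqVF}, I would verify $F[\phi]=\lim_k F_k[\phi]=0$ for every $\phi\in V(D,\Gamma_{\mathrm{Dir}})^N$ and deduce $\u^\ast|_{\Omega_\ast}=\u_\ast$ from the uniqueness in Theorem \ref{ThWelPos}. The bulk term of $J$ then converges by the energy identity $\lim_k\|\mathds{1}_{\Omega_k}\nabla E\u_k\|_{L^2(D)^{N\times N}}=\|\mathds{1}_{\Omega_\ast}\nabla\u^\ast\|_{L^2(D)^{N\times N}}$, while the $L^2$ term converges by the strong $L^2(D)^N$ convergence; hence $\lim_k J(\Omega_k,\mu_{\partial\Omega_k},\u_k)=J(\Omega_\ast,\mu_{\partial\Omega_\ast},\u_\ast)$, and putting $\Omega_{\opt}:=\Omega_\ast$, $\mu_{\partial\Omega_{\opt}}:=\mu_{\partial\Omega_\ast}$ delivers the minimizer.

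The step I expect to demand the most care is the passage to the limit in the boundary term $\int_{\Gamma_{\mathrm{Neu},k}}\mathbf{g}\cdot\operatorname{Tr}\phi\,d\mu_{\partial\Omega_k}$ inside $F_k[\phi]$: one must combine the weak$^\ast$ convergence $\mu_{\partial\Omega_k}\rightharpoonup\mu_{\partial\Omega_\ast}$ with uniform control of the traces of the fixed field $\mathbf{g}$ and of $\phi$ along the moving boundaries, and check that the limit measure assigns no spurious mass to $\Gamma_{\mathrm{Dir}}$. It is precisely the uniform upper and lower regularity built into \eqref{E:U} that both keeps $\mu_{\partial\Omega_\ast}$ admissible and furnishes, via Theorem \ref{ThTracecheap} and the trace machinery of \cite{HINZ-2020}, the compactness of the relevant traces; granting this framework, the argument closes as in \cite[Theorem 3.2]{MAGOULES-2020}.
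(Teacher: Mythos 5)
Your proposal is correct and follows essentially the same route as the paper, which proves Theorem \ref{ThOptimalEpsDelta} by combining the compactness of the class \eqref{E:U} from \cite[Theorem 3]{HINZ-2020}, the uniform a priori bound from Lemma \ref{L:equivnorms} (ii) and Theorem \ref{ThWelPos}, and uniform extension-operator norms, and then repeating the limit argument of Theorem \ref{ThPrincipale}. Your observation that the decisive difference from the Lipschitz case is the closedness of the admissible class under weak$^\ast$ limits of the boundary measures (so that the infimum becomes a minimum) is exactly the point the paper is making.
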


Slight modifications of Theorems~\ref{ThPrincipale} and~\ref{ThOptimalEpsDelta} yield generalizations of Theorem \ref{Throof} on optimal roof shapes, such as the following. Let $N=3$ and let $D\subset \mathbb{R}^3$ be a bounded Lipschitz domain, $G$ a nonempty open proper subset of $D$ and $\Gamma_{\mathrm{Dir}}$ a compact subset of $\partial D$ with $\mathcal{H}^2(\Gamma_{\mathrm{Dir}})>0$ as in Section \ref{SecShapeOp}. Consider the class

\begin{multline}\label{E:Uroof}
	U_{\ad}(D,G,\varepsilon,c_v,s,d,\bar{c}_s,c_d,\Gamma_{\mathrm{Dir}})
	=\{(\Omega,\mu_{\partial\Omega})\ |\  \Omega\in \hat{\mathcal{O}}(D,\varepsilon),\ \lambda^3(\Omega)=c_\nu,\\ 
	\partial\Omega=\Gamma_{\mathrm{Dir}}\cup \Gamma_{\mathrm{Neu}},\ \Gamma_{\mathrm{Dir}}= \partial \Omega\cap \partial D, \ \mu_{\partial\Omega}=\mathcal{H}^{N-1}|_{\Gamma_{\mathrm{Dir}}} + \mu_{\Gamma_{\mathrm{Neu}}}, \\ \text{$\mu_{\Gamma_{\mathrm{Neu}}}$ with $\supp \mu_{\Gamma_{\mathrm{Neu}}}= \Gamma_{\mathrm{Neu}}$ satisfies \eqref{E:upperreg} and \eqref{E:lowerreg}},\ \mu_{\partial\Omega}(\Gamma_{\mathrm{Dir}}\cap \Gamma_{\mathrm{Neu}})=0,\\	
	\Gamma_{\mathrm{Neu}}=\Gamma_{\lo}\cup \Gamma_{\up},\ \Gamma_{\lo}\subset \overline{G}, \ \Gamma_{\up}=\Gamma_{\lo}+h_z\mathbf{e}_z\}.
\end{multline}

This class is compact with respect to the same four types of convergences as \eqref{E:U}. (We point out that 
\cite[Theorem 3]{HINZ-2020} can be used without problems, because the domains $\Omega$ 
are nonempty by construction.) The following result, which roughly speaking is a corollary of Theorem \ref{ThOptimalEpsDelta}, generalizes Theorem \ref{Throof} to the case of possibly nonzero Neumann data $\mathbf{g}$ on the moving parts. 

\begin{corollary}\label{C:roof}
Let $U_{\ad}(D,G,\varepsilon,c_v,s,d,\bar{c}_s,c_d,\Gamma_{\mathrm{Dir}})$ be as in \eqref{E:Uroof}, $\rho_0>0$, $c_1\geq 0$ and $c_2\geq 0$. For each $(\Omega,\mu_{\partial\Omega})\in U_{\ad}(D,G,\varepsilon,c_v,s,d,\bar{c}_s,c_d,\Gamma_{\mathrm{Dir}})$ let $\u(\Omega,\mu_{\partial\Omega})$ denote the unique weak solution to \eqref{Eq} with $\mathbf{g}\in [\mathrm{Tr}(W^{1,2}(D))]^3$ and $\mathbf{f}\equiv \mathbf{f}(\Omega)$ defined as in \eqref{Eqhz}. 

Then 
there is an optimal shape $(\Omega_{\opt},\mu_{\partial\Omega_{\opt}})\in U_{\ad}(D,G,\varepsilon,c_v,s,d,\bar{c}_s,c_d,\Gamma_{\mathrm{Dir}})$ realizing the minimum of~\eqref{EqJ} over $U_{\ad}(D,G,\varepsilon,c_v,s,d,\bar{c}_s,c_d,\Gamma_{\mathrm{Dir}})$.
\end{corollary}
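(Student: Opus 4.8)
The plan is to deduce Corollary \ref{C:roof} from Theorem \ref{ThOptimalEpsDelta} by verifying that the roof class \eqref{E:Uroof} fits the framework of \eqref{E:U} and that the force field $\mathbf{f}(\Omega)$ defined in \eqref{Eqhz} depends on the shape in a manner compatible with the compactness argument underlying Theorem \ref{ThOptimalEpsDelta}. First I would observe that \eqref{E:Uroof} is obtained from \eqref{E:U} by imposing the extra geometric constraints $\Gamma_{\mathrm{Neu}}=\Gamma_{\lo}\cup\Gamma_{\up}$, $\Gamma_{\lo}\subset\overline{G}$ and $\Gamma_{\up}=\Gamma_{\lo}+h_z\mathbf{e}_z$, together with the role of $D_0$ being played implicitly by the nonemptiness of $\Omega$; as the excerpt already notes, \cite[Theorem 3]{HINZ-2020} applies and yields compactness of \eqref{E:Uroof} in all four senses (Hausdorff, compacts, characteristic functions, and weak$^\ast$ convergence of the boundary measures). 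Thus the only genuinely new point relative to Theorem \ref{ThOptimalEpsDelta} is that here the data $\mathbf{f}=\mathbf{f}(\Omega)$ is not fixed but varies with $\Omega$ through $h_z(\Omega)=c_v/\mu(\Gamma_{\lo})$.

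The key steps, in order, are as follows. Take a minimizing sequence $(\Omega_n,\mu_{\partial\Omega_n})_n$ for \eqref{EqJ} over \eqref{E:Uroof}, which exists because the functional is nonnegative. By compactness, pass to a subsequence converging in the four senses to a limit $(\Omega_\ast,\mu_{\partial\Omega_\ast})$ in the class. The central claim to establish is that $\mathbf{f}(\Omega_n)\to\mathbf{f}(\Omega_\ast)$ in $L^2$, which by \eqref{Eqhz} reduces to the convergence $h_z(\Omega_n)\to h_z(\Omega_\ast)$ of the scalar heights. Since $h_z(\Omega)=c_v/\mu(\Gamma_{\lo})$ and the volume $c_v$ is fixed, this follows from the weak$^\ast$ convergence $\mu_{\partial\Omega_n}\to\mu_{\partial\Omega_\ast}$ restricted to the lower boundary parts, provided one controls the masses $\mu_n(\Gamma_{\lo,n})$; the lower-regularity condition \eqref{E:lowerreg} together with the confinement $\Gamma_{\lo}\subset\overline{G}$ keeps these masses bounded away from $0$ and $\infty$, so that $h_z$ is continuous along the sequence and, in particular, $\mathbf{f}(\Omega_n)$ is uniformly bounded in $L^2(D)^N$ and converges strongly to $\mathbf{f}(\Omega_\ast)$.

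With this convergence of the data in hand, I would reproduce the argument of Theorem \ref{ThOptimalEpsDelta} (which in turn follows the Lipschitz proof of Theorem \ref{ThPrincipale}): using the uniform a priori bound \eqref{EqEstimAPriori} from Remark \ref{R:constantsindep} and the uniform extension estimates, extract a weak $W^{1,2}(D)^N$ limit $\u^\ast$ of the extended solutions $E\u_n$, identify $\u^\ast|_{\Omega_\ast}$ with the weak solution $\u(\Omega_\ast,\mu_{\partial\Omega_\ast})$ by passing to the limit in the variational identity \eqref{EqVF}, and finally pass to the limit in the energy functional itself to conclude $\lim_n J(\Omega_n,\mu_{\partial\Omega_n},\u_n)=J(\Omega_\ast,\mu_{\partial\Omega_\ast},\u_\ast)$, so that $(\Omega_\ast,\mu_{\partial\Omega_\ast})$ realizes the minimum. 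The main obstacle I anticipate is precisely the passage to the limit in the variational formulation when both the force field $\mathbf{f}(\Omega_n)$ and the boundary measure $\mu_{\partial\Omega_n}$ vary simultaneously with $n$: one must ensure that the boundary integral $\int_{\Gamma_{\mathrm{Neu},n}}\mathbf{g}\cdot\operatorname{Tr}\mathbf{\theta}\,d\mu_{\partial\Omega_n}$ converges to its limit counterpart under weak$^\ast$ convergence of measures on moving fractal boundaries, which is the delicate point already handled in \cite{HINZ-2020} via the Mosco convergence recalled in Appendix \ref{SecMosco}; the extra shape-dependence of $\mathbf{f}$ only adds the comparatively mild requirement of verifying $h_z(\Omega_n)\to h_z(\Omega_\ast)$, which the regularity conditions \eqref{E:upperreg}--\eqref{E:lowerreg} secure.
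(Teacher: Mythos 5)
Your proposal takes essentially the same route as the paper, which proves the corollary only by noting that the class \eqref{E:Uroof} is compact with respect to the same four convergences as \eqref{E:U} (via \cite[Theorem 3]{HINZ-2020}, the domains being nonempty by construction) and that the argument of Theorem \ref{ThOptimalEpsDelta} then carries over with slight modifications. Your explicit treatment of the shape-dependence of $\mathbf{f}(\Omega)$ through $h_z(\Omega)=c_v/\mu(\Gamma_{\lo})$ is exactly the ``slight modification'' the paper leaves implicit, and your reasoning for it is sound.
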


\appendix
\section{Green's formulas}\label{AppDef}

We provide versions of Green's formulas in the context of Theorem \ref{ThTracecheap}. They are not used explicitely in our results, but they provide the correct justification of the variational formulation \eqref{Eq}. Consider the space
\[H(\mathrm{div},\Omega)=\{\u=(u_1,\ldots,u_N)^t:\Omega \to \R^N|\; u_1, \ldots u_N\in L^2(\Omega),\ \mathrm{div}\,\u \in L^2(\Omega)\}, \]
it is a Hilbert space corresponding to the norm
\[\|\u\|_{\mathrm{div},\Omega}=\left(\|\u\|^2_{L^2(\Omega)^N}+\|\div\, \u \|^2_{L^2(\Omega)} \right)^\frac{1}{2}.\]

\begin{proposition}\label{P:Greenformula}

		Let $\Omega\subset \mathbb{R}^N$ be a bounded $W^{1,2}$-extension domain. Suppose that $\mu_{\partial\Omega}$ is a Borel measure with $\supp \mu_{\partial\Omega}=\partial\Omega$ and such that \eqref{E:upperreg} holds with some $N-2<d\leq N$. 
	For all  $\mathbf{u}\in H(\mathrm{div},\Omega)$  we can define a bounded linear functional
	$\mathbf{u}\cdot \mathbf{n}\in (\operatorname{Tr}(W^{1,2}(\Omega)))'$ by the identity 
	\begin{equation}\label{EqGreenVect}
		\langle \mathbf{u}\cdot \mathbf{n}, \operatorname{Tr} w\rangle_{(\operatorname{Tr}(W^{1,2}(\Omega)))', \operatorname{Tr}(W^{1,2}(\Omega))}=\int_\Omega \mathbf{u}\cdot \nabla w\, \dx -\int_\Omega (\mathrm{div}\, \u) w\, \dx,
	\end{equation}
	$w\in W^{1,2}(\Omega)$.
	
For all $T\in H(\mathrm{div},\Omega)^N$ we can define a bounded linear functional $T\cdot \mathbf{n}\in [(\operatorname{Tr}(W^{1,2}(\Omega)))']^N$ by
\begin{equation}\label{EqGreen}
	\langle T\cdot \mathbf{n}, \operatorname{Tr}\theta\rangle_{[(\operatorname{Tr}(W^{1,2}(\Omega)))']^N, [\operatorname{Tr}(W^{1,2}(\Omega))]^N}=\int_\Omega T: \nabla \theta\, \dx -\int_\Omega (\mathrm{div}\, T)\cdot \theta\, \dx,
\end{equation}
$\theta\in W^{1,2}(\Omega)^N$. 
\end{proposition}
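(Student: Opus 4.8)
The plan is to read \eqref{EqGreenVect} not as an assertion about an a priori given object $\mathbf{u}\cdot\mathbf{n}$, but as the \emph{definition} of a functional, and to verify that this definition is legitimate and bounded. Fix $\mathbf{u}\in H(\mathrm{div},\Omega)$ and denote by $\ell_{\mathbf{u}}$ the right-hand side of \eqref{EqGreenVect}, regarded first as a map $w\mapsto\ell_{\mathbf{u}}(w)$ on all of $W^{1,2}(\Omega)$. First I would record that $\ell_{\mathbf{u}}$ is linear and, by two applications of the Cauchy--Schwarz inequality,
\[|\ell_{\mathbf{u}}(w)|\le \|\mathbf{u}\|_{L^2(\Omega)^N}\|\nabla w\|_{L^2(\Omega)^N}+\|\mathrm{div}\,\mathbf{u}\|_{L^2(\Omega)}\|w\|_{L^2(\Omega)}\le \|\mathbf{u}\|_{\mathrm{div},\Omega}\|w\|_{W^{1,2}(\Omega)},\]
so that $\ell_{\mathbf{u}}\in (W^{1,2}(\Omega))'$ with dual norm at most $\|\mathbf{u}\|_{\mathrm{div},\Omega}$.

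The essential point is then to show that $\ell_{\mathbf{u}}$ depends on $w$ only through $\operatorname{Tr} w$, equivalently that $\ell_{\mathbf{u}}$ annihilates the kernel of $\operatorname{Tr}$. For $w\in C_c^\infty(\Omega)$ the value $\ell_{\mathbf{u}}(w)$ vanishes, this being exactly the defining property of the distributional divergence of $\mathbf{u}$. By the continuity just established, $\ell_{\mathbf{u}}$ therefore vanishes on the $W^{1,2}$-closure $W_0^{1,2}(\Omega)$ of $C_c^\infty(\Omega)$. To conclude I would invoke the trace theory underlying Theorem \ref{ThTracecheap} to identify $\ker\operatorname{Tr}=W_0^{1,2}(\Omega)$: a field $v\in W^{1,2}(\Omega)$ with $\operatorname{Tr} v=0$ $\mu_{\partial\Omega}$-a.e.\ on $\partial\Omega$ has a quasicontinuous representative (in the sense of \eqref{E:pointwiseredef}) vanishing on $\partial\Omega$ off a $\mu_{\partial\Omega}$-null set, and under the hypotheses $\supp\mu_{\partial\Omega}=\partial\Omega$ and \eqref{E:upperreg} with $d>N-2$ this upgrades to vanishing up to the relevant capacity, which is the classical characterization of membership in $W_0^{1,2}(\Omega)$.

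Once $\ell_{\mathbf{u}}|_{\ker\operatorname{Tr}}=0$ is known, $\ell_{\mathbf{u}}$ factors through the quotient $W^{1,2}(\Omega)/\ker\operatorname{Tr}$, which by Theorem \ref{ThTracecheap}(i) is isometrically $\operatorname{Tr}(W^{1,2}(\Omega))$ equipped with its quotient norm. This defines $\mathbf{u}\cdot\mathbf{n}$ unambiguously on $\operatorname{Tr}(W^{1,2}(\Omega))$ and makes \eqref{EqGreenVect} hold for every representative $w$ of a given trace; the quotient-norm definition yields $|\langle\mathbf{u}\cdot\mathbf{n},\varphi\rangle|\le\|\mathbf{u}\|_{\mathrm{div},\Omega}\|\varphi\|_{\operatorname{Tr}(W^{1,2}(\Omega))}$, so $\mathbf{u}\cdot\mathbf{n}\in(\operatorname{Tr}(W^{1,2}(\Omega)))'$. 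Finally, the tensor identity \eqref{EqGreen} follows by applying this scalar--vector result to each row $T_i=(T_{i1},\dots,T_{iN})\in H(\mathrm{div},\Omega)$ with $w=\theta_i$ and summing over $i$, using $(\mathrm{div}\,T)\cdot\theta=\sum_i(\mathrm{div}\,T_i)\theta_i$ and $T:\nabla\theta=\sum_i T_i\cdot\nabla\theta_i$; the resulting functional $T\cdot\mathbf{n}=(T_1\cdot\mathbf{n},\dots,T_N\cdot\mathbf{n})$ lies in $[(\operatorname{Tr}(W^{1,2}(\Omega)))']^N$ with the asserted bound.

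I expect the main obstacle to be the kernel identification $\ker\operatorname{Tr}=W_0^{1,2}(\Omega)$ in the second step: passing from vanishing $\mu_{\partial\Omega}$-a.e.\ on $\partial\Omega$ to the capacitary vanishing required for $W_0^{1,2}(\Omega)$ is precisely the delicate comparison between the boundary measure and the $W^{1,2}$-capacity, and it is here that the support condition and the exponent restriction $d>N-2$ are genuinely used, rather than in the soft functional-analytic steps that surround it.
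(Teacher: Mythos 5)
Your overall architecture is sound and, for the tensor part, coincides with the paper's (the paper also obtains \eqref{EqGreen} from \eqref{EqGreenVect} row by row). But the route to \eqref{EqGreenVect} differs from the paper's. The paper does not argue via the kernel of $\operatorname{Tr}$ at all: it invokes the proof of \cite[Theorem 2.5]{GIRAULT-1986}, which first establishes the identity for smooth vector fields using the classical divergence theorem --- so that the boundary term is visibly $\int_{\partial\Omega}(\operatorname{Tr}w)\,\mathbf{u}\cdot n\,d\mu_{\partial\Omega}$ and hence depends only on $\operatorname{Tr}w$ --- and then extends $\mathbf{u}\mapsto\mathbf{u}\cdot\mathbf{n}$ by density of smooth fields in $H(\mathrm{div},\Omega)$, using Theorem \ref{ThTracecheap} only for the norm estimate. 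That route sidesteps any identification of $\ker\operatorname{Tr}$; its cost is that one must know the classical Green formula and the density of smooth fields in $H(\mathrm{div},\Omega)$ for the domains at hand, which the paper leaves implicit. Your route, defining the functional directly and factoring through $W^{1,2}(\Omega)/\ker\operatorname{Tr}$, is in principle better adapted to rough boundaries --- but it shifts the entire burden onto the inclusion $\ker\operatorname{Tr}\subseteq W_0^{1,2}(\Omega)$.

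That inclusion is where your argument has a genuine gap: the justification you give runs the capacity--measure comparison in the wrong direction. Upper $d$-regularity \eqref{E:upperreg} with $d>N-2$ yields $\mu_{\partial\Omega}(A)\lesssim\mathcal{H}^d_\infty(A)$, hence that sets of zero $(1,2)$-capacity are $\mu_{\partial\Omega}$-null; this gives the \emph{unneeded} inclusion $W_0^{1,2}(\Omega)\subseteq\ker\operatorname{Tr}$. To ``upgrade'' vanishing $\mu_{\partial\Omega}$-a.e.\ to vanishing quasi-everywhere, as your second step requires, you would need every subset of $\partial\Omega$ of positive capacity to carry positive $\mu_{\partial\Omega}$-mass, and neither $\supp\mu_{\partial\Omega}=\partial\Omega$ nor \eqref{E:upperreg} provides this (a boundary can contain compact sets of Hausdorff dimension strictly between $N-2$ and $d$, hence of positive capacity but zero $\mu_{\partial\Omega}$-measure). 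For Lipschitz domains with surface measure the inclusion $\ker\operatorname{Tr}\subseteq W_0^{1,2}(\Omega)$ is a classical theorem, but its proof uses the boundary geometry (e.g.\ a Hardy-inequality argument), not the soft comparison you invoke; in the present generality it would have to be extracted from the finer results of \cite{BIEGERT-2009} and \cite{AH96}, and typically needs a lower bound of the boundary measure against capacity. There is also a secondary wrinkle you gloss over: the Havin--Bagby characterization of $W_0^{1,2}(\Omega)$ requires quasi-everywhere vanishing on $\Omega^c$, not merely on $\partial\Omega$, so even granted the upgrade one must still modify the extension off $\overline{\Omega}$. As it stands, the central step of your proof is not established.
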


For the special case that $\Omega$ is a Lipschitz domain and $\mu_{\partial\Omega}$ equals $\mathcal{H}^{N-1}$ a proof of formula \eqref{EqGreenVect} can be found in \cite[Theorem 2.5 and formula (2.17)]{GIRAULT-1986}. The same proof, combined with Theorem \ref{ThTracecheap}, yields \eqref{EqGreenVect}. (Note also that a particular version of~\eqref{EqGreenVect} had been proved in~\cite{ARXIV-CREO-2018} for a specific domain $\Omega$.) Formula~\eqref{EqGreen} is just its matrix form and an immediate consequence of \eqref{EqGreenVect}. See~\cite[p. 69]{CIARLET-1988} for a version of this formula for smooth domains and tensors.

\section{A technical remark}\label{S:technical}

We comment briefly on how to see the claimed identity \eqref{E:itreallyvanishes} in the proof of Lemma \ref{L:equivnorms}, i.e., that $\widetilde{E_\Omega \u}=0$ $\mu_{\partial D}$-a.e. on $\Gamma_{\mathrm{Dir}}$. One can argue as follows: If $u_i$ are the components of $\u$ then the vector fields $\u^{(M)}$, $M\geq 1$, with components $u_i^{(M)}:=(-M)\vee (u_i\wedge M)$ satisfy $\lim_{M\to \infty}\big\|\u^{(M)}-\u\big\|_{W^{1,2}(\Omega)^{N}}=0$, what by \eqref{E:extensions} is inherited to their extensions. Therefore (and by Theorem \ref{ThTracecheap}) it suffices to show \eqref{E:itreallyvanishes} for $\u$ with bounded components. If $\chi$ is a smooth cut-off function with compact support inside an open ball $B\supset \overline{D}$ then we have $(\chi E_\Omega \u)^{\sim}=\widetilde{E_\Omega \u}$ on $\overline{D}$. Since  
$L^\infty$-functions with finite Dirichlet integral form an algebra and Poincar\'e's inequality holds for $B$, the product  $\chi E_\Omega \u$ is an element of $\mathring{W}^{1,2}(B)$, and we can use standard capacity arguments based on this space instead of $W^{1,2}(\mathbb{R}^N)$. In particular, adequate modifications of \cite[Theorems 5.2 and 5.4]{BIEGERT-2009} now allow to conclude
\eqref{E:itreallyvanishes} similarly as in \cite[Theorem 6.1]{BIEGERT-2009}.

\section{On the Mosco convergence of energy functionals}\label{SecMosco}

In this auxiliary section we consider energy functionals for Robin problems of type
\[\begin{cases}
	\hspace{30pt}-\mathrm{div} \:e(\mathbf{u})&=\mathbf{f} \quad \text{in $\Omega$},\\  
	e(\mathbf{u})\cdot n +\alpha \mathbf{1}_{\Gamma}\mathbf{u}&=0  \quad \text{on $\partial\Omega$},
	\end{cases}\]
where $\alpha\ge0$ and $\Gamma$ is a subset of $\partial\Omega$. We show that if a sequence of domains converges in a suitable sense then these energy functionals on the domains converge in the sense of Mosco \cite{MOSCO-1969}. 

Let $H$ be a Hilbert space. Recall from \cite[Definition 2.1.1]{MOSCO-1969} that a sequence of quadratic forms $a_n(\cdot,\cdot):H\times H\rightarrow (-\infty,+\infty]$ is said to \emph{$M$-converge} on $H$ to a quadratic form $a(\cdot,\cdot):H\times H\rightarrow (-\infty,+\infty]$ if
\begin{enumerate}
\item For every $u\in H$ there is a sequence $(u_n)_n\subset H$, convergent to $u$ and such that
$\varlimsup_{n\to\infty} a_n(u_n,u_n)\leq a(u,u)$.
\item For every sequence $(v_n)_n\subset H$ converging weakly to $u\in H$ we have
$\varliminf_{n\to\infty} a_n(v_n,v_n)\geq a(u,u)$.
\end{enumerate}

Let  $N\geq 2$ and let $\Omega\subset \mathbb{R}^N$ be a bounded domain. Suppose that $\mu_{\Gamma_{\mathrm{Dir}}}$ and $\mu_{\Gamma}$ are Borel measures with $\supp \mu_{\Gamma_{\mathrm{Dir}}}=\Gamma_{\mathrm{Dir}}$ and $\supp \mu_{\Gamma}=\Gamma$ such that $\partial\Omega=\Gamma_{\mathrm{Dir}}\cup \Gamma$ and $\Gamma_{\mathrm{Dir}}\cap \Gamma$ is a zero set for both $\mu_{\Gamma_{\mathrm{Dir}}}$ and $\mu_{\Gamma}$. For each $n\geq 1$ let $\Omega_n\subset \mathbb{R}^N$ a domain and let $\mu_{\Gamma_n}$ be a Borel measure with $\supp \mu_{\Gamma_n}=\Gamma_{n}$ such that $\partial\Omega_n=\Gamma_{\mathrm{Dir}}\cup \Gamma_{n}$ and $\Gamma_{\mathrm{Dir}}\cap \Gamma_{n}$ is a zero set for both $\mu_{\Gamma_{\mathrm{Dir}}}$ and $\mu_{\Gamma_n}$.
If all domains are $W^{1,2}$-extension domains contained in a bounded Lipschitz domain $D$ and all $\mu_{\Gamma_{\mathrm{Dir}}}$, $\mu_{\Gamma}$ and $\mu_{\Gamma_n}$ satisfy \eqref{E:upperreg} with $N-2<d\leq N$ 
then for any non-negative real numbers $\alpha$ and $\alpha_n$ we can define quadratic forms on $L^2(D)^N$ by

\begin{align}
	a_n(\u,&\u)=\nonumber\\
	&\left\{\begin{array}{l}
	             	\int\limits_{\Omega_n} e(\u):e(\u)\dx+\int\limits_{\Omega_n}|\u|^2\dx +\alpha_n \int\limits_{\Gamma_{n}}|\mathrm{Tr} \u|^2 d \mu_{\Gamma_n}, \; \u\in V(\Omega_n,\Gamma_{\mathrm{Dir}})^N,\\ 
	             	+\infty,\;\; \hbox{otherwise}
	             \end{array}
	             \right. \notag\\
	             \text{and} \notag\\
    a(\u,&\u)=\nonumber\\
	&\left\{\begin{array}{l}
	             	\int\limits_{\Omega} e(\u):e(\u)\dx+\int\limits_{\Omega}|\u|^2\dx +\alpha \int\limits_{\Gamma}|\mathrm{Tr} \u|^2 d \mu_{\Gamma}, \; \u\in V(\Omega,\Gamma_{\mathrm{Dir}})^N,\\ 
	             	+\infty,\;\; \hbox{otherwise.}
	             \end{array}
	             \right. \notag
\end{align}

The following convergence result can be obtained by the same arguments as ~\cite[Theorem~8]{HINZ-2020}.

\begin{theorem}\label{ThMoscoR}

Let $\varepsilon>0$, let $(\Omega_n)_n$ be a sequence of $(\varepsilon,\infty)$-domains $\Omega_n\subset D$. Let $\Gamma_{\mathrm{Dir}}$, $\Gamma_n$ and $\mu_{\Gamma_{\mathrm{Dir}}}$, $\mu_{\Gamma_n}$ be as above and assume that all $\mu_{\Gamma_n}$ satisfy \eqref{E:upperreg} with the same exponent $N-1<d\leq N$ and the same constant $c_d$. 

If $\lim_n \Omega_n=\Omega$ in the Hausdorff sense and in the sense of characteristic functions and $\lim_n \alpha_n\mu_n=\alpha\mu$ in the sense of weak convergence, then we have 
\[\lim_n a_n=a\]
in the sense of $M$-convergence on $L^2(D)$.
	\end{theorem}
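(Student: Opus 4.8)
The plan is to verify the two Mosco conditions (the $\varlimsup$ recovery-sequence condition and the $\varliminf$ liminf condition) directly, exploiting the quadratic/energy structure and the convergence hypotheses at hand. The natural Hilbert space is $H=L^2(D)^N$, and the key technical inputs are: the uniform extension bound \eqref{E:extensions} from \cite{JONES-1981} (valid with a constant depending only on $\varepsilon$ and $N$ across all $(\varepsilon,\infty)$-domains $\Omega_n\subset D$), the uniform trace bound and compact trace embedding of Theorem \ref{ThTracecheap}, the norm equivalence and Poincar\'e inequality of Lemma \ref{L:equivnorms} (ii) with constants uniform over the family, and the weak convergence $\alpha_n\mu_{\Gamma_n}\rightharpoonup \alpha\mu_\Gamma$ together with $\mathbf 1_{\Omega_n}\to\mathbf 1_\Omega$ in $L^1(D)$ and Hausdorff convergence. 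Since \cite[Theorem~8]{HINZ-2020} treats the scalar Helmholtz situation by exactly this scheme, the main work is to confirm that each step survives the passage to the vector-valued elasticity form, where $e(\u)$ replaces $\nabla u$; the decisive point is that \eqref{E:extensions} and Lemma \ref{L:equivnorms} already control $e(\u)$ uniformly, so the coercivity needed to run the argument is in place.

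For the $\varliminf$ condition I would take $\u_n\rightharpoonup\u$ in $L^2(D)^N$ and may assume $\varliminf_n a_n(\u_n,\u_n)<\infty$, so that along a subsequence each $\u_n\in V(\Omega_n,\Gamma_{\mathrm{Dir}})^N$ and the energies are bounded. Extending by $\ext_{\Omega_n}$ and invoking \eqref{E:extensions} and Lemma \ref{L:equivnorms} yields a uniform $W^{1,2}(D)^N$ bound on $\ext_{\Omega_n}\u_n$; hence a further subsequence converges weakly in $W^{1,2}(D)^N$ to some $\u^\ast$, and by compact embedding strongly in $L^2(D)^N$, forcing $\u^\ast=\u$ and, using $\mathbf 1_{\Omega_n}\to\mathbf 1_\Omega$, that $\u\in V(\Omega,\Gamma_{\mathrm{Dir}})^N$ (the vanishing of the trace on $\Gamma_{\mathrm{Dir}}$ is preserved as in Appendix \ref{S:technical}). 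Weak lower semicontinuity of the two bulk quadratic terms then gives the correct liminf for the volume contributions. The boundary term is handled by the weak convergence $\alpha_n\mu_{\Gamma_n}\rightharpoonup\alpha\mu_\Gamma$ combined with the compactness of the trace maps supplied by Theorem \ref{ThTracecheap}; this is precisely the delicate point, since one must pass to the limit in $\alpha_n\int_{\Gamma_n}|\operatorname{Tr}\u_n|^2\,d\mu_{\Gamma_n}$ while both the measure and the domain vary, and it is where the uniform exponent $N-1<d\le N$ and uniform constant $c_d$ in the hypothesis are essential.

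For the $\varlimsup$ condition, given $\u\in H$ I would first reduce to $\u\in V(\Omega,\Gamma_{\mathrm{Dir}})^N$ (otherwise $a(\u,\u)=+\infty$ and any sequence works); then the recovery sequence is built by restricting a fixed $W^{1,2}(D)^N$-extension of $\u$ to each $\Omega_n$ (say $\u_n:=(\ext_\Omega\u)|_{\Omega_n}$, extended by zero to $D$). Because $\mathbf 1_{\Omega_n}\to\mathbf 1_\Omega$ one gets $\u_n\to\u$ in $L^2(D)^N$ and convergence of the two bulk integrals to their limits; the boundary energies converge via the weak convergence of $\alpha_n\mu_{\Gamma_n}$ applied to the fixed continuous-in-trace integrand $|\operatorname{Tr}(\ext_\Omega\u)|^2$, again using Theorem \ref{ThTracecheap} to make sense of and control the traces uniformly. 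A small subtlety is ensuring $\u_n\in V(\Omega_n,\Gamma_{\mathrm{Dir}})^N$, i.e. that the restricted extension still has vanishing trace on the common $\Gamma_{\mathrm{Dir}}$; this follows because $\Gamma_{\mathrm{Dir}}$ is shared across the family and lies on $\partial D$, so the capacitary argument of Appendix \ref{S:technical} applies verbatim.

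The main obstacle I anticipate is the simultaneous limit in the boundary term under two moving objects (the varying measure $\mu_{\Gamma_n}$ and the varying domain $\Omega_n$), particularly establishing that traces converge in the right sense so that $\int_{\Gamma_n}|\operatorname{Tr}\u_n|^2\,d\mu_{\Gamma_n}\to\int_\Gamma|\operatorname{Tr}\u|^2\,d\mu_\Gamma$. This is exactly the content that \cite[Theorem~8]{HINZ-2020} resolves in the scalar case, and since the elasticity modification only replaces $\nabla$ by $e(\cdot)$ while all the uniform extension, trace and coercivity estimates remain available component-wise, the same arguments transfer; accordingly I would present the proof as a verification that the hypotheses of \cite[Theorem~8]{HINZ-2020} are met component-wise and that the vector-valued bulk terms enjoy the same weak lower semicontinuity and continuity properties as their scalar counterparts.
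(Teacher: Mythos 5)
Your proposal is correct and follows essentially the same route as the paper, which proves Theorem \ref{ThMoscoR} simply by invoking the arguments of \cite[Theorem~8]{HINZ-2020}; you verify the two Mosco conditions by the same scheme, checking that the uniform extension, trace and Korn-type estimates transfer component-wise from the scalar Helmholtz setting to the vector-valued elasticity form with $e(\u)$ in place of $\nabla u$. You correctly identify the simultaneous limit in the boundary term (varying measure and varying domain, requiring $N-1<d\le N$ with uniform $c_d$) as the delicate step resolved in \cite{HINZ-2020}.
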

	
\begin{remark}
For simplicity, and because it is sufficient for many practical purposes, $\Gamma_{\mathrm{Dir}}$ is assumed to be fixed in the above setup. It would actually be sufficient to keep the set $\Gamma_{\mathrm{Dir}}\cap \Gamma_n$ fixed, see ~\cite[Theorem~10]{DEKKERS-2020}.
\end{remark}

%

\def\refname{References}
\bibliographystyle{siam}
\bibliography{biblio.bib}
\end{document}